\documentclass[12pt]{amsart}

%\date{\today}
\usepackage{amsmath}
\usepackage{amssymb}
\usepackage{amsthm}
\usepackage{graphicx}
\usepackage{amscd}
\usepackage{epic, eepic, color}
\usepackage{url}
 \textwidth 16cm
 \textheight 23 cm
 \topmargin -1cm
 \oddsidemargin 0cm
 \evensidemargin 0cm
 \parskip 2mm
 \setlength{\parindent}{0pt}

 \newtheorem{theorem}{\sc Theorem}[section]
 \newtheorem{lemma}[theorem]{\sc Lemma}
 \newtheorem{cor}[theorem]{\sc Corollary}
 \newtheorem{proposition}[theorem]{\sc Proposition}
 \newtheorem{prop}[theorem]{\sc Proposition}

\newtheorem{remark}[theorem]{\sc Remark}
\newtheorem{defi}[theorem]{\sc Definition}
\newtheorem{result}[theorem]{\sc Result}
\newcommand{\Pe}{\mathcal{P}}
\newcommand{\El}{\mathcal{L}}

\newcommand{\PS}{\Pe_S}
\newcommand{\LS}{\El_S}
\newcommand{\PG}{\mathrm{PG}}
\newcommand{\AG}{\mathrm{AG}}
\newcommand{\GF}{\mathrm{GF}}
\newcommand{\ind}{\mathrm{ind}}
\newcommand{\cut}[1]{}
\newcommand{\mb}{\makebox(0,0)}
\newcommand{\cd}{3}
\newcommand{\cdd}{3.7}
\newcommand{\dll}{1.75}
\newcommand{\fontdef}{\SetFigFontNFSS{10}{10}{\rmdefault}{\mddefault}{\updefault}}
\newcommand{\fontsmall}{\SetFigFontNFSS{8}{8}{\rmdefault}{\mddefault}{\updefault}}
\newcounter{connum}
\setcounter{connum}{0}
\newcommand{\cnum}{\stepcounter{connum}C\arabic{connum}}

%\def\endmark{\hskip 2em$Square$\par}
%\def\proof{\trivlist \item[\hskip \labelsep{\bf Proof\ }]}
%\def\endproof{\null\hfill\endmark\endtrivlist}

 % Numbering equations
 \makeatletter
 \@addtoreset{equation}{section}
 \makeatother

 % Numbering tables
 \makeatletter
 \@addtoreset{table}{section}
 \makeatother

 % Smaller section titles
% \makeatletter
% \defSection{\@startsection {section}{1}{\z@}{-1.5ex plus -.5ex minus -.2ex}{1ex plus .2ex}{\large\bf}}

%\newif\ifdeveloping                   
%\developingtrue                      
                                      
%\ifdeveloping                         
%\usepackage[notref,notcite]{showkeys} 
%\fi

\begin{document}

\title{Resolving sets and semi-resolving sets in finite projective planes}
\author[H\'eger]{Tam\'as H\'eger}
\author[Tak\'ats]{Marcella Tak\'ats}
\thanks{Authors were supported by OTKA grant K 81310. The first author was also supported by ERC Grant 227701 DISCRETECONT. 
Appeared in Electronic J.~Comb.\ \textbf{19:4} \#P30 (2012). This version contains minor corrections regarding the list of smallest resolving sets, see the supplementary page 21 for details.}

\begin{abstract}
In a graph $\Gamma=(V,E)$ a vertex $v$ is \emph{resolved} by a vertex-set $S=\{v_1,\ldots,v_n\}$ if its (ordered) distance list with respect to $S$, $(d(v,v_1),\ldots,d(v,v_n))$, is unique. A set $A\subset V$ is resolved by $S$ if all its elements are resolved by $S$. $S$ is a \emph{resolving set} in $\Gamma$ if it resolves $V$. The \emph{metric dimension of $\Gamma$} is the size of the smallest resolving set in it. In a bipartite graph a \emph{semi-resolving set} is a set of vertices in one of the vertex classes that resolves the other class.

We show that the metric dimension of the incidence graph of a finite projective plane of order $q\geq 23$ is $4q-4$, and describe all resolving sets of that size. Let $\tau_2$ denote the size of the smallest double blocking set in PG$(2,q)$, the Desarguesian projective plane of order $q$. We prove that for a semi-resolving set $S$ in the incidence graph of PG$(2,q)$, $|S|\geq \min \{2q+q/4-3, \tau_2-2\}$ holds. In particular, if $q\geq9$ is a square, then the smallest semi-resolving set in $\PG(2,q)$ has size $2q+2\sqrt{q}$. As a corollary, we get that a blocking semioval in PG$(2,q)$, $q\geq4$, has at least $9q/4-3$ points.

  % keywords are optional
  \bigskip\noindent \textbf{Keywords:} finite projective plane; resolving set;
  semi-resolving set; Sz\H{o}nyi-Weiner Lemma
\end{abstract}

\maketitle

\centerline{\today}

\section{Introduction}

For an overview of resolving sets and related topics we refer to the survey of Bailey and Cameron \cite{BC}. Regarding these, we follow the notations of \cite{BC, Bailey}.
Throughout the paper, $\Gamma=(V,E)$ denotes a simple connected graph with vertex-set $V$ and edge-set $E$. For $x,y\in V$, $d(x,y)$ denotes the distance of $x$ and $y$ (that is, the length of the shortest path connecting $x$ and $y$). $\Pi=(\Pe,\El)$ always denotes a finite projective plane with point-set $\Pe$ and line-set $\El$, and $q$ denotes the order of $\Pi$. Sometimes $\Pi_q$ refers to the projective plane of order $q$.

\begin{defi}\label{resdef1}
$S=\{s_1,\ldots,s_k\}\subset V$ is a \emph{resolving set} in $\Gamma=(V,E)$, if the ordered distance lists $(d(x,s_1),\ldots,d(x,s_k))$ are unique for all $x\in V$. The \emph{metric dimension of $\Gamma$}, denoted by $\mu(\Gamma)$, is the size of the smallest resolving set in it.
\end{defi}

Equivalently, $S$ is a resolving set in $\Gamma=(V,E)$ if and only if for all $x,y\in V$, there exists a point $z \in S$ such that $d(x,z)\neq d(y,z)$.
In other words, the vertices of $\Gamma$ can be distinguished by their distances from the elements of a resolving set. We say that a vertex $v$ is \emph{resolved} by $S$ if its distance list with respect to $S$ is unique. A set $A\subset V$ is resolved by $S$ if all its elements are resolved by $S$.
If the context allows, we omit the reference to $S$. Note that the distance list is ordered (with respect to an arbitrary fixed ordering of $S$), the (multi)set of distances is not sufficient.

Take a projective plane $\Pi=(\Pe,\El)$. The incidence graph $\Gamma(\Pi)$ of $\Pi$ is a bipartite graph with vertex classes $\Pe$ and $\El$, where $P\in\Pe$ and $\ell\in\El$ are adjacent in $\Gamma$ if and only if $P$ and $\ell$ are incident in $\Pi$.
By a resolving set or the metric dimension of $\Pi$ we mean that of its incidence graph.
In \cite{Bailey}, Bailey asked for the metric dimension of a finite projective plane of order $q$. In Section \ref{SectRes} we prove the following theorem using purely combinatorial tools.

\begin{theorem}
\label{4q-4}
The metric dimension of a projective plane of order $q\geq23$ is $4q-4$.
\end{theorem}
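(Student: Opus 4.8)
The plan is to first translate the resolving condition into a statement about incidences. Since the incidence graph has diameter $3$, the distance list of a point $P$ records, for each element $s$ of $S$, only whether $P=s$, whether $P$ lies on $s$, or neither; hence two points $P\neq Q$ outside $S_P:=S\cap\Pe$ receive the same list if and only if they lie on exactly the same lines of $S_L:=S\cap\El$, and dually for two lines outside $S_L$ with respect to $S_P$. I record the two facts I will use throughout: a point on at least two lines of $S_L$ is automatically resolved (two lines meet in one point), so the only unresolved points lie on at most one line of $S_L$; and among these, the $0$-covered points form one collision class, while for each $\ell\in S_L$ the points lying on $\ell$ and on no other line of $S_L$ form one class. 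Writing $n_i$ for the number of points on exactly $i$ lines of $S_L$ and $t_\ell$ for the number of points private to $\ell$, resolving the points forces
\[
|S_P|\ \ge\ (n_0-1)^{+}+\sum_{\ell\in S_L}(t_\ell-1)^{+},
\]
and dually $|S_L|$ is bounded below by the analogous quantity built from the coverage of the lines by $S_P$.

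For the upper bound I would exhibit an explicit set of size $4q-4$ and check it against this criterion. Fix two points $A,B$, put $a=AB$, choose two further lines $b,c$ through $A$ and a point $Y_0\in a\setminus\{A,B\}$, and take
\[
S_L=\{\text{lines through }A\text{ other than }a,b,c\}\cup\{\text{lines through }B\text{ other than }a\},
\]
\[
S_P=\{\text{points of }a\text{ other than }A,B,Y_0\}\cup\{\text{points of }b\text{ other than }A\}.
\]
A direct check shows every point off $a\cup b\cup c$ lies on the two lines $AR,BR\in S_L$ and so is resolved; every $1$-covered point lies on $b\cup c$ and is either in $S_P$ or the unique private point of its line, $Y_0$ is the only $0$-covered point outside $S_P$, and dually $c$ is the only $0$-covered line outside $S_L$. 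Thus $S$ resolves and $|S|=(2q-2)+(2q-2)=4q-4$.

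The lower bound is the heart of the matter and where I expect the real difficulty. Combining the displayed inequality with the standard incidence identities $\sum_i n_i=q^2+q+1$, $\sum_i i\,n_i=|S_L|(q+1)$ and $\sum_i\binom{i}{2}n_i=\binom{|S_L|}{2}$ only yields $|S_P|+|S_L|\ge n_0+n_1-1$, which is too weak: the useful term is $\sum_\ell(t_\ell-1)^{+}=n_1-\#\{\ell:t_\ell\ge1\}$, and this beats the trivial bound only when the lines of $S_L$ concentrate into few pencils, so that the private points pile up on few lines. The plan is therefore to run the point- and line-conditions \emph{simultaneously} and exploit a structural dichotomy for $S_L$: either its lines are spread among many pencils, in which case $n_0+n_1$ is already large enough for the point-condition alone to give $|S_P|+|S_L|\ge 4q-4$, or $S_L$ sits essentially in a bounded number of pencils, in which case the dual line-condition forces $S_P$ to be close to two lines' worth of points and a careful count of the private points on both sides closes the gap. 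I expect the threshold $q\ge23$ to enter precisely here, through stability-type estimates that upgrade ``almost two pencils / almost two lines'' from a tendency to a rigid conclusion. The main obstacle is controlling the interaction of the two sides at once, namely bounding $\sum_\ell(t_\ell-1)^{+}$ and its dual together without double-counting the few exceptional points and lines that the inequalities permit to fall outside $S_P$ and $S_L$.

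Finally, for the characterisation of the extremal sets I would trace back the chain of inequalities: equality throughout forces $n_0=q-1$, forces all private points into $S_P$ (and dually), and forces $S_L$ and $S_P$ to be two pencils and two lines with exactly the prescribed deletions, which is precisely the family constructed above (up to the duality of the plane).
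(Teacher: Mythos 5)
Your reformulation of the resolving condition is correct and coincides with the paper's Propositions~\ref{resdef3} and~\ref{resdef4}: two outer points collide exactly when they meet $\El_S$ in the same set of lines, so at most one uncovered point and at most one ``private'' point per line of $\El_S$ may lie outside $\Pe_S$, giving $|\Pe_S|\ge (n_0-1)^{+}+\sum_{\ell}(t_\ell-1)^{+}$ and its dual. Your construction is also correct and has size $4q-4$. It is not literally the example of Proposition~\ref{const} (there the two extra deleted points lie one on each of the two lines, whereas you delete $B$ and $Y_0$ both from $a$), but it is one of the thirty-two extremal types catalogued in Section~\ref{SectCons} (essentially type (C3)), and properties P1, P1$'$, P2, P2$'$ check out for it: the unique outer skew line is $c$, the unique outer uncovered point is $Y_0$, the unique outer tangent through an inner point $X\in b\setminus\{A\}$ is $XY_0$, and the unique outer $1$-covered point on an inner line $\ell$ through $B$ is $\ell\cap c$.

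The lower bound, however, is not proved; what you offer is a plan that stops exactly where the difficulty begins. Your aggregate inequality together with its dual and the standard counting identities yields only about $|S|\ge 4q-10$ (this is the content of Proposition~\ref{trivibound}), and the remaining gap of six is the entire substance of the theorem. Closing it requires establishing the structural dichotomy you merely posit: that every line meets $\Pe_S$ in at most $4$ or at least $q-4$ points (Proposition~\ref{secantlength}), that two long secants genuinely exist (Proposition~\ref{standard}), that their total deficiency satisfies $k+l\le 3$ (Proposition~\ref{k+l}), and then the dual statements locating $\El_S$ in two pencils (Propositions~\ref{r_1} and~\ref{p} and the final counting step) --- each a separate, nontrivial double-counting argument, and this is precisely where the hypothesis $q\ge 23$ is consumed. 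Your assertion that in the ``spread out'' case ``$n_0+n_1$ is already large enough for the point-condition alone'' is unjustified and is not how the bound is in fact obtained. Moreover, your concluding claim that equality forces ``precisely the family constructed above'' is false: there are $32$ distinct extremal types, so the rigidity you are relying on to finish the argument does not hold in the form you expect. As it stands, the proposal establishes $\mu\le 4q-4$ together with a correct necessary condition, but the inequality $\mu\ge 4q-4$ is missing.
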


It follows that the highly symmetric incidence graph of a Desarguesian projective plane attains a relatively large dimension jump (for definitions and details see the end of Section \ref{SectRes}).

Section \ref{SectCons} is devoted to the description of all resolving sets of a projective plane $\Pi$ of size $4q-4$ ($q\geq 23$).

One may also try to construct a resolving set for $\Pi=(\Pe,\El)$ the following way: take a point-set $\Pe_S\subset\Pe$ that resolves $\El$, and take a line-set $\El_S\subset\El$ that resolves $\Pe$. Then $S=\Pe_S\cup\El_S$ is clearly a resolving set. 
Such a resolving set $S$ is called a \emph{split resolving set}, and $\Pe_S$ and $\El_S$ are called \emph{semi-resolving sets}. 
By $\mu^*(\Pi)$ we denote the size of the smallest split resolving set of $\Pi$ (see \cite{Bailey}).
As we will see in Section \ref{SectSemires}, semi-resolving sets are in tight connection with double blocking sets.

\begin{defi}
A set $B$ of points is a \emph{double blocking set} in a projective plane $\Pi$, if every line intersects $B$ in at least two points. $\tau_2=\tau_2(\Pi)$ denotes the size of the smallest double blocking set in $\Pi$.
\end{defi}

Let $\PG(2,q)$ denote the Desarguesian projective plane of order $q$. In Section \ref{SectSemires} we use the polynomial method and the Sz\H{o}nyi-Weiner Lemma to prove that if a semi-resolving set $S$ is small enough, then one can extend it into a double blocking set by adding at most two points to $S$. This yields the following results.

\begin{theorem}
\label{semirestheo}
Let $S$ be a semi-resolving set in $\PG(2,q)$, $q\geq3$. Then $|S|\geq\min\{2q+q/4-3,\tau_2(\PG(2,q))-2\}$. If $q\geq 121$ is a square prime power, then $|S|\geq 2q+2\sqrt{q}$.
\end{theorem}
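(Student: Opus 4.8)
The plan is to read the resolving condition off the incidence graph and then show that a small semi-resolving set is ``almost'' a double blocking set. By self-duality of $\PG(2,q)$ we may assume $S=\PS\subseteq\Pe$ resolves the line set $\El$. Since a point and a line are at distance $1$ if incident and at distance $3$ otherwise, $\PS$ resolves a pair of lines $\ell_1,\ell_2$ exactly when some point of $\PS$ lies on precisely one of them; hence $\PS$ \emph{fails} to resolve $\{\ell_1,\ell_2\}$ iff every point of $\PS$ meets $\ell_1\cup\ell_2$ only in $\ell_1\cap\ell_2$. First I would record the resulting reformulation: $\PS$ is a semi-resolving set if and only if (i) at most one line misses $\PS$ entirely (an \emph{external} line), and (ii) through every point of $\PS$ there passes at most one \emph{tangent} (a line meeting $\PS$ in that single point). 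In particular two distinct tangents can never share their point of $\PS$, so any two tangents meet outside $\PS$.

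Thus the only lines not met twice by $\PS$ are the at most one external line $\ell_\infty$ and the tangents, of which there are at most $|\PS|$ by (ii). The engine of the proof is the dichotomy: \emph{either} these deficient lines can all be blocked a second time by adjoining at most two new points $R_1,R_2$ to $\PS$, \emph{or} $|\PS|\geq 2q+q/4-3$. In the first alternative $\PS\cup\{R_1,R_2\}$ is a double blocking set, so $\tau_2\leq|\PS|+2$ and we obtain the bound $|\PS|\geq\tau_2-2$. Note that a successful extension requires every tangent to pass through $R_1$ or $R_2$ and, if $\ell_\infty$ exists, forces $R_1,R_2\in\ell_\infty$; so the first alternative is exactly a \emph{concentration} statement for the tangent lines.

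The core of the argument -- and the main obstacle -- is to prove this dichotomy, and this is where the polynomial method and the Sz\H{o}nyi--Weiner Lemma enter. The instructive case is when an external line $\ell_\infty$ is present: viewing $\PS$ in the affine plane $\AG(2,q)=\PG(2,q)\setminus\ell_\infty$, the tangents in a fixed direction $D\in\ell_\infty$ are the parallel lines of that class carrying a single point of $\PS$. Since no affine line is external (there is only one external line, namely $\ell_\infty$), all $q$ lines of each class are nonempty, and a short double count of $\sum_i i\,a_i=|\PS|$ against $\sum_i a_i=q$ shows that each direction carries at least $2q-|\PS|$ tangents; comparing this with the global bound of $|\PS|$ tangents already yields $|\PS|\gtrsim 2q$. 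To sharpen this to $2q+q/4-3$ -- and to show that when $|\PS|$ is below this threshold the tangents are confined to at most two directions, so that the extension succeeds -- I would encode the tangents by a R\'edei-type polynomial and apply the Sz\H{o}nyi--Weiner Lemma to control, direction by direction, the number of tangents avoiding a fixed pair of points. The delicate part is uniformity: ruling out a third ``tangent-rich'' direction, and carrying out the parallel directionless analysis in the case with no external line, is precisely where the quantitative strength of the lemma, rather than the crude secant count above, is indispensable.

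Finally, the square prime power statement is a corollary of the general bound together with the known extremal value of the double blocking number. For $q\geq 121$ a square one has $\tau_2(\PG(2,q))=2(q+\sqrt{q}+1)$, whence $\tau_2-2=2q+2\sqrt{q}$; and $2q+2\sqrt{q}\leq 2q+q/4-3$ holds exactly when $q-8\sqrt{q}-12\geq0$, which is true for $q\geq121$. Hence the minimum in the theorem equals $\tau_2-2$, and we conclude $|\PS|\geq 2q+2\sqrt{q}$.
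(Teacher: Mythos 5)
Your high-level architecture matches the paper's exactly: reformulate the semi-resolving condition as ``at most one skew line and at most one tangent per point of $S$,'' prove the dichotomy ``either $|S|\geq 2q+q/4-3$ or $S$ extends to a double blocking set by adding at most two points,'' and then feed in the Ball--Blokhuis value $\tau_2(\PG(2,q))=2(q+\sqrt{q}+1)$ for square $q$. Your closing computation ($\tau_2-2=2q+2\sqrt{q}\leq 2q+q/4-3$ for $q\geq121$) is correct and is how the paper concludes.

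The genuine gap is that the dichotomy itself --- which you correctly identify as ``the core of the argument and the main obstacle'' --- is never proved; you only announce that you \emph{would} encode tangents in a R\'edei polynomial and apply the Sz\H{o}nyi--Weiner Lemma. Everything quantitative is missing. The paper's mechanism is concrete: for $|S|=2q+\beta$ it assigns to each point $P$ the index $\ind(P)=2\ind_0(P)+\ind_1(P)$, chooses a $(\geq2)$-secant through $P$ with $s\leq q-1$ points of $S$ as the line at infinity (not the external line, as in your sketch --- the external line need not pass through the point under scrutiny), and applies Sz\H{o}nyi--Weiner to the R\'edei polynomial $R(B,M)$ and $(B^q-B)^2$ to get $\ind(P)^2-(q-\beta)\ind(P)+2q+\beta\geq0$. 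From this quadratic one deduces that for $\beta\leq q/4-5/2$ every index is either $\leq 2$ or $\geq q-\beta-2$, that every tangent (resp.\ skew line) carries at least one (resp.\ two) large-index points, and finally --- by counting the $\geq q-\beta-4$ tangents forced through each large-index point against the global bound of at most $|S|$ tangents --- that there are at most two large-index points, which are the two points to adjoin. None of these steps is routine from your per-direction tangent count (which only yields $|S|\gtrsim 2q$, as you note), and in particular your plan does not address how to handle the case with no external line or how a ``third tangent-rich direction'' is actually excluded. As written, the proposal establishes the easy bookkeeping and the final corollary but not the theorem.
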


\begin{cor}\label{semirescor}
Let $q\geq 3$. Then $\mu^*(\PG(2,q))\geq\min\{4q+q/2-6,2\tau_2(\PG(2,q))-4\}$. If $q\geq 121$ is a square prime power, then $\mu^*(\PG(2,q))=4q+4\sqrt{q}$.
\end{cor}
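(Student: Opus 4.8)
The plan is to deduce Corollary \ref{semirescor} from Theorem \ref{semirestheo} by exploiting that a split resolving set decomposes into two semi-resolving sets, together with the self-duality of $\PG(2,q)$.

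First, the lower bound. Write a split resolving set as $S=\Pe_S\cup\El_S$, where $\Pe_S\subset\Pe$ resolves $\El$ and $\El_S\subset\El$ resolves $\Pe$. By definition $\Pe_S$ is a semi-resolving set, so Theorem \ref{semirestheo} gives $|\Pe_S|\geq\min\{2q+q/4-3,\tau_2-2\}$. For $\El_S$ I would invoke the self-duality of $\PG(2,q)$: under a polarity $\sigma$, incidence is preserved, so the line-set $\El_S$ resolving $\Pe$ maps to a point-set $\El_S^{\sigma}$ resolving the lines of $\PG(2,q)^{\sigma}=\PG(2,q)$; hence $\El_S^{\sigma}$ is a semi-resolving set and $|\El_S|=|\El_S^{\sigma}|$ obeys the same bound. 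Adding the two estimates and using $2\min\{a,b\}=\min\{2a,2b\}$ yields $|S|=|\Pe_S|+|\El_S|\geq\min\{4q+q/2-6,\,2\tau_2-4\}$, which is the first assertion. For a square prime power $q\geq121$, the second part of Theorem \ref{semirestheo} gives $|\Pe_S|,|\El_S|\geq 2q+2\sqrt{q}$, whence $\mu^*(\PG(2,q))\geq 4q+4\sqrt{q}$.

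It remains to exhibit a split resolving set of size $4q+4\sqrt{q}$ for square prime powers, which by the dual argument reduces to constructing one semi-resolving point-set of size $2q+2\sqrt{q}$ and taking its image under $\sigma$ as the line part. The key observation is that $\Pe_S$ fails to resolve two distinct lines $\ell,\ell'$ precisely when every point of $\Pe_S$ on $\ell\cup\ell'$ equals the common point $\ell\cap\ell'$; in particular any double blocking set resolves $\El$. I would therefore start from a minimum double blocking set given by the union of two disjoint Baer subplanes (of size $2(q+\sqrt{q}+1)=2q+2\sqrt{q}+2$, which exist for $q$ a square, e.g. inside a Baer subplane partition), delete two carefully chosen points, and check that the resulting set of size $2q+2\sqrt{q}$ still resolves every pair of lines. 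Dualizing and taking the union produces the required split resolving set, so with the lower bound we get $\mu^*(\PG(2,q))=4q+4\sqrt{q}$.

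The main obstacle is this last verification: after deleting two points, a few lines carry only one point of $\Pe_S$, and I must ensure that no two such ``almost tangent'' lines form an unresolved pair, i.e. that no line is left with its unique $\Pe_S$-point sitting at an intersection with another deficient line. This should follow from the intersection pattern of a Baer subplane — every line of $\PG(2,q)$ meets it in $1$ or $\sqrt{q}+1$ points — which lets me pick the two deleted points so that each line still meets $\Pe_S$ in a point distinct from any prescribed intersection. I would also rely on the value $\tau_2(\PG(2,q))=2(q+\sqrt{q}+1)$ for square prime powers $q\geq121$, so that the constructed set is genuinely minimal and the upper and lower bounds coincide at $4q+4\sqrt{q}$.
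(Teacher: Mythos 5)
Your proposal follows essentially the same route as the paper: decompose a split resolving set into a semi-resolving point-set $\Pe_S$ and a line-set $\El_S$, apply Theorem \ref{semirestheo} to $\Pe_S$ and, via the self-duality of $\PG(2,q)$, to $\El_S$, sum the two bounds, and match the resulting lower bound $4q+4\sqrt{q}$ (using $\tau_2=2q+2\sqrt{q}+2$) with the construction obtained from two disjoint Baer subplanes. The verification you flag as ``the main obstacle'' is exactly the paper's Proposition \ref{semicon}(ii) and dissolves once you delete one point $P_i$ from \emph{each} subplane $B_i$ rather than two arbitrary points: since each $B_i$ is a blocking set, the only candidate skew line is $P_1P_2$ and the only candidate tangent through a remaining point $Q\in B_1\setminus\{P_1\}$ is $QP_2$, so Proposition \ref{semidef1} applies with no case analysis on the intersection pattern of the subplanes.
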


A \emph{Baer subplane} in $\Pi_q$, $q$ square, is a set of $q+\sqrt{q}+1$ points that intersects every line in either one or $\sqrt{q}+1$ points. It is well known that for a square prime power $q$, the point-set of $\PG(2,q)$ can be partitioned into $q-\sqrt{q}+1$ mutually disjoint Baer subplanes.

When we refer to duality, we simply mean that as the axioms of projective planes are symmetric in points and lines,
we may interchange the role of points and lines (e.g., consider a set of lines as a set of points), and if we have a result regarding points, then we have the same (dual) result regarding lines. A finite projective plane is not necessarily isomorphic to its dual, however, $\PG(2,q)$ is.

Finally, let us fix the notation and terminology we use regarding $\PG(2,q)$. Let $\GF(q)$ denote the finite field of $q$ elements. For the standard representation of $\PG(2,q)$ as homogeneous triplets, we refer to \cite{Hirschfeld}. The representatives of a point or a line are denoted by a triplet in round or square brackets, respectively. 
Let $\AG(2,q)$ denote the desarguesian affine plane of order $q$. If we consider $\AG(2,q)$ embedded into $\PG(2,q)$, then we call the line in $\PG(2,q)$ outside $\AG(2,q)$ \emph{the line at infinity}, denote it by $\ell_{\infty}$, and we call its points \emph{ideal points}. 
We choose the co-ordinate system so that $\ell_{\infty}=[0:0:1]$, and we denote the point $(1:m:0)\in\ell_{\infty}$ ($m\in\GF(q)$) by $(m)$, and $(0:1:0)\in\ell_{\infty}$ by $(\infty)$. 
A line $[m:-1:1]$ with affine equation $y=mx+b$ is said to have slope $m$. The common (ideal) point of vertical lines or lines of slope $m$ is $(\infty)$ and $(m)$, respectively. 
The points of $\ell_{\infty}$ are also called \emph{directions}, and, with the exception of $(\infty)$, they are identified naturally with the elements of $\GF(q)$ by $(m)\in\ell_{\infty}\leftrightarrow m\in\GF(q)$. 
A point $(x:y:1)\in\AG(2,q)$ is also denoted by $(x,y)$.
Considering a set $S$, a line $\ell$ is an $(\leq i)$-secant, an $(\geq i)$-secant, or an $i$-secant to $S$ if $\ell$ intersects $S$ in at most $i$, at least $i$, or exactly $i$ points, respectively.

%%%%%%%%%%%%%%%%%%%%%%%%%%%%%%%%%%%%%%%%%%%%%%%%%%%%%%%
\section{Resolving sets in finite projective planes}\label{SectRes}

Note that the distance of two distinct points (lines) is always two, while the distance of a point $P$ and a line $\ell$ is $1$ or $3$, depending on $P\in\ell$ or $P\notin\ell$, respectively. Note that the elements of a set $S$ are resolved by $S$ trivially, as there is a zero in their distance lists.

\textbf{Notation.} 
\begin{itemize}
\item{For two distinct points $P$ and $Q$, let $PQ$ denote the line joining $P$ and $Q$.}
\item{For a point $P$, let $[P]$ denote the set of lines through $P$. Similarly, for a line $\ell$, let $[\ell]$ denote the set of points on $\ell$. Note that we distinguish a line from the set of points it is incident with.}
\item{Once a subset $S$ of points and lines is fixed, the terms \emph{inner point} and \emph{inner line} refer to the elements of $S$, while \emph{outer points} and \emph{outer lines} refer to those not in $S$.}
\item{For a fixed subset $S$ of points and lines we say a line $\ell$ is skew or tangent to $S$ if $[\ell]$ contains zero or one point from $S$, respectively. Similarly, we say a point $P$ is not covered or $1$-covered by $S$ if $[P]$ contains zero or one line from $S$, respectively.}
\item{For a subset $S$ of points and lines, let $\Pe_S=S\cap\Pe$, $\El_S=S\cap\El$.}
\end{itemize}

\begin{lemma}\label{lemma}
Let $S=\Pe_S\cup\El_S$ be a set of vertices in the incidence graph of a finite projective plane. Then any line $\ell$ intersecting $\Pe_S$ in at least two points (that is, $|[\ell]\cap\Pe_S|\geq2$) is resolved by $S$.
Dually, if a point $P$ is covered by at least two lines of $\El_S$ (that is, $|[P]\cap\El_S|\geq2$), then $P$ is resolved by $S$.
\end{lemma}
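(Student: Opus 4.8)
The plan is to show directly that the line $\ell$ has a distance list distinct from that of every other vertex of the incidence graph, by exhibiting, for each competing vertex, a single element of $S$ that separates it from $\ell$. Throughout I will use the two guaranteed points $P_1,P_2\in[\ell]\cap\Pe_S$, together with the basic distance data recorded above: for a line $\ell$ and a point $P$ one has $d(\ell,P)=1$ if $P\in\ell$ and $d(\ell,P)=3$ otherwise, while the distance between two points (or between two lines) is always $0$ or $2$. Since $P_1,P_2\in\Pe_S\subseteq S$, the values $d(\ell,P_1)$ and $d(\ell,P_2)$ both occur as coordinates of the distance list of $\ell$.

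First I would separate $\ell$ from every point $Q$. Because $P_1\in\ell$ we have $d(\ell,P_1)=1$, whereas $d(Q,P_1)\in\{0,2\}$ for any point $Q$. This is exactly the parity obstruction coming from the bipartite structure of the incidence graph: distances from a line to a point are odd, while distances between two points are even. Hence $\ell$ and any point $Q$ already differ at the coordinate corresponding to $P_1$, so $\ell$ is separated from all points (and indeed this uses only one of the two points).

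Next I would separate $\ell$ from every other line $m\neq\ell$. The key — and essentially the only — observation here is that $\ell$ is the unique line through the two distinct points $P_1$ and $P_2$; consequently a line $m\neq\ell$ cannot contain both, so $P_i\notin m$ for at least one $i\in\{1,2\}$. For such an $i$ we get $d(m,P_i)=3\neq1=d(\ell,P_i)$, and thus $\ell$ is separated from $m$ at the coordinate corresponding to $P_i$.

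Combining the two cases, no vertex other than $\ell$ shares the distance list of $\ell$, so $\ell$ is resolved by $S$; the dual statement follows at once by interchanging the roles of points and lines. I expect no substantial obstacle: the argument rests only on the elementary distance structure of the incidence graph and on the fact that two points determine a unique line. The hypothesis $|[\ell]\cap\Pe_S|\geq2$ is used precisely once, namely to supply the second point $P_2$ needed for the line-versus-line comparison (one point alone would suffice against all points but not against lines).
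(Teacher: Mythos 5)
Your argument is correct and matches the paper's proof: the paper likewise uses two points $P,Q\in[\ell]\cap\Pe_S$ and the fact that any other line misses at least one of them, giving $d(P,\ell)=1\neq3=d(P,e)$, and concludes by duality. Your extra step separating $\ell$ from points via the parity of distances is handled implicitly in the paper (it records at the start of Section 2 that point--point distances are even while point--line distances are odd), so the two arguments are essentially identical.
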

\begin{proof}
Let $\ell$ be a line, $\{P,Q\}\subset[\ell]\cap\Pe_S$, $P\neq Q$. Then any line $e$ different from $\ell$ may contain at most one point of $\{P,Q\}$, hence e.g.~$P\notin [e]$, hence $d(P,\ell)=1\neq d(P,e)=3$. By duality, this holds for points as well.
\end{proof}

\begin{proposition}\label{resdef3}
$S=\Pe_S\cup\El_S$ is a resolving set in a finite projective plane if and only if the following properties hold for $S$:
\begin{itemize}
\item[P1]{There is at most one outer line skew to $\Pe_S$.}
\item[P1']{There is at most one outer point not covered by $\El_S$.}
\item[P2]{Through every inner point there is at most one outer line tangent to $\Pe_S$.}
\item[P2']{On every inner line there is at most one outer point that is $1$-covered by $\El_S$.}
\end{itemize}
\end{proposition}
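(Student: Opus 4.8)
The plan is to reduce the resolving condition to a statement about pairs of outer vertices, and then translate that into the four listed properties. First I would record how distance lists behave: for distinct points $P,R$ we have $d(P,R)=2$, while $d(\ell,R)\in\{1,3\}$ for a line $\ell$ and a point $R$, and dually with points and lines interchanged. Two consequences are immediate. An inner vertex carries a $0$ in its own coordinate of every distance list, so it is automatically resolved and can collide with no other vertex. And if $S\neq\emptyset$, any $s\in S$ already separates an outer point $P$ from an outer line $\ell$, since the $s$-entries of their lists lie in $\{2\}$ and $\{1,3\}$ (or in $\{1,3\}$ and $\{2\}$) according as $s$ is a point or a line. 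Hence $S$ is a resolving set if and only if (a) distinct outer points always have distinct distance lists and (b) distinct outer lines always have distinct distance lists.

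Next I would analyse (a). Two distinct points $P,P'$ have equal distance lists exactly when, for every $m\in\El_S$, one has $P\in m\iff P'\in m$; that is, when $[P]\cap\El_S=[P']\cap\El_S$. Since $P$ and $P'$ lie together only on the single line $PP'$, this shared set of inner lines is contained in $\{PP'\}$, hence has size $0$ or $1$. The size-$0$ case says that $P$ and $P'$ are two outer points not covered by $\El_S$, and is excluded precisely by P1'. The size-$1$ case says that $PP'\in\El_S$ and each of $P,P'$ is $1$-covered, its unique covering inner line being $PP'$ itself; such a pair exists exactly when some inner line carries two outer $1$-covered points, which is excluded precisely by P2'. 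Thus (a) holds if and only if both P1' and P2' hold. Running the identical argument in the dual plane, where two outer lines collide iff they meet $\Pe_S$ in the same (at most one) point, the empty case being governed by P1 and the tangent case by P2, gives that (b) holds if and only if P1 and P2 hold. Combining, $S$ is resolving iff (a) and (b) iff all four properties hold.

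The step needing the most care, and the one I expect to be the main (if modest) obstacle, is the bookkeeping in the size-$1$ case. One must observe that a $1$-covered outer point lying on an inner line $m$ necessarily has $m$ as its unique covering line, so that any two such points on $m$ automatically have coincident inner-line sets; and conversely that two indistinguishable $1$-covered points must have their join $PP'$ inner. Equally important is keeping the qualifier \emph{outer} throughout, so that the trivially resolved inner vertices are correctly omitted from the counts in P1--P2', which is exactly what makes the ``at most one'' thresholds match the definitions of skew, tangent, not-covered and $1$-covered.
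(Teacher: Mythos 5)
Your proof is correct and follows essentially the same route as the paper: both reduce to collisions between outer vertices of the same class, use the fact that two distinct points (lines) share at most one line (point) so that a colliding pair agrees on a set of size $0$ or $1$ in $S$, and match the two cases to P1/P1' and P2/P2' (the paper packages the size-$\geq 2$ case as Lemma \ref{lemma} and invokes duality, which is only a presentational difference). Your write-up is if anything slightly more explicit about the ``only if'' direction, which the paper leaves implicit.
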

\begin{proof}
By duality and Lemma \ref{lemma} it is enough to see that $S$ resolves lines not in $\El_S$ that are skew or tangent to $\Pe_S$. Property 1 (P1) assures that skew lines are resolved. Now take a tangent line $\ell\notin\El_S$. If there were another line $e$ with the same distance list as $\ell$'s (hence $e\notin\El_S$), both $e$ and $\ell$ would be tangents to $\Pe_S$ through the point $[\ell]\cap\Pe_S$, which is not possible by Property 2 (P2).
\end{proof}

We will usually refer to the above alternative definition, but sometimes it is useful to keep the following in mind.

\begin{proposition}\label{resdef4}
$S=\Pe_S\cup\El_S$ is a resolving set in a finite projective plane if and only if the following properties hold for $S$:
\begin{itemize}
\item[PA]{Through any point $P$ there is at most one outer line not blocked by $\Pe_S\setminus\{P\}$.}
\item[PA']{On any line $\ell$ there is at most one outer point not covered by $\El_S\setminus\{\ell\}$.}
\end{itemize}
\end{proposition}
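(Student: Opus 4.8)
The plan is to derive Proposition \ref{resdef4} from the already-established Proposition \ref{resdef3}. Since that proposition characterizes resolving sets by the four conditions P1, P1', P2, P2', it suffices to prove that the pair \{PA, PA'\} is logically equivalent to \{P1, P1', P2, P2'\}. The conditions come in dual pairs (PA is dual to PA', P1 to P1', P2 to P2'), and the conjunction \{P1, P1', P2, P2'\} is invariant under interchanging the roles of points and lines, as is \{PA, PA'\}. I would therefore first reduce to proving the single equivalence
$$\text{PA} \Longleftrightarrow \text{P1 and P2},$$
and then obtain PA' $\Longleftrightarrow$ P1' and P2' for free by duality. Combining the two equivalences and invoking Proposition \ref{resdef3} finishes the argument.

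To prove PA $\Longleftrightarrow$ P1 and P2, I would split the condition PA according to whether the point $P$ lies in $\Pe_S$. If $P\in\Pe_S$ is an inner point, then $\Pe_S\setminus\{P\}$ simply omits $P$, so a line $\ell$ through $P$ fails to be blocked by $\Pe_S\setminus\{P\}$ exactly when $P$ is the only point of $\Pe_S$ on $\ell$, i.e.\ when $\ell$ is tangent to $\Pe_S$ at $P$. Hence PA restricted to inner points asserts that through each inner point there is at most one outer line tangent to $\Pe_S$, which is precisely P2.

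The remaining case is the genuinely substantive one. If $P\notin\Pe_S$ is an outer point, then $\Pe_S\setminus\{P\}=\Pe_S$, and a line $\ell$ through $P$ is not blocked by $\Pe_S$ exactly when $\ell$ is skew to $\Pe_S$. Thus PA restricted to outer points says: through every outer point there is at most one skew outer line. I claim this \emph{local} statement is equivalent to the \emph{global} statement P1, that there is at most one skew outer line altogether. One direction is immediate, since a global bound forces the local one. For the converse I would argue by contradiction: if two distinct skew outer lines $\ell_1,\ell_2$ existed, their intersection point $Q$ would lie on $\ell_1$, forcing $Q\notin\Pe_S$ (otherwise $\ell_1$ would meet $\Pe_S$); so $Q$ is an outer point carrying two skew outer lines, contradicting the local version. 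This establishes that the outer-point part of PA coincides with P1, completing the equivalence PA $\Longleftrightarrow$ P1 and P2.

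The only step needing a real idea is this last one, the passage between the per-point skew condition and the global condition P1. The key observation is that any two skew lines meet, and their common point, lying on a skew line, can only be an outer point; this is exactly what lets a purely local hypothesis propagate to a global conclusion. Everything else in the proof is a direct unfolding of the definitions of \emph{tangent}, \emph{skew}, \emph{blocked}, and \emph{covered}, together with the dualization that turns the argument for PA into the argument for PA'.
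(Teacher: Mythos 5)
Your proof is correct and follows essentially the same route as the paper: splitting PA according to whether $P\in\Pe_S$ (recovering P2) or $P\notin\Pe_S$ (a local skew-line bound), observing that the intersection point of two skew lines would be an outer point violating the local bound (hence local $\Leftrightarrow$ global P1), and finishing by duality. No gaps.
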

\begin{proof}
In other words, the Property A claims that on a point $P\in\Pe_S$ there may be at most one tangent from $\El\setminus\El_S$, while on a point $P\notin\Pe_S$ there may be at most one skew line. As the intersection point of two skew lines would validate the latter one, Property A is equivalent to Properties 1 and 2 of Proposition \ref{resdef3}. Dually, the same holds for the Properties with commas.
\end{proof}

\begin{proposition}\label{const}
The metric dimension of a projective plane of order $q\geq 3$ is at most $4q-4$.
\end{proposition}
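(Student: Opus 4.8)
The plan is to exhibit a single resolving set of size $4q-4$ and check it against the criterion of Proposition \ref{resdef3}, i.e.\ to produce $\Pe_S\subset\Pe$ and $\El_S\subset\El$ with $|\Pe_S|+|\El_S|=4q-4$ satisfying P1, P1', P2 and P2'. A crude bound is immediate: the three sides of a triangle form a double blocking set (every line is a $(\geq 2)$-secant), so by Proposition \ref{resdef4} it resolves lines, and a dual triangle resolves points, giving a split resolving set of size $6q$. To push this down to $4q-4$ I would exploit that P1 and P2 only constrain \emph{outer} skew/tangent lines: a skew or tangent line lying in $\El_S$ is inner, hence automatically resolved, and dually an uncovered or $1$-covered point lying in $\Pe_S$ is harmless. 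So the idea is to take a point set whose ``bad'' lines form a single pencil and a line set whose ``bad'' points form a single line, arranged so that each absorbs the other's defects.

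Concretely, I would fix two lines $\ell_1,\ell_2$ through $R$, a point $P\in[\ell_1]\setminus\{R\}$, a point $C\in[\ell_2]\setminus\{R\}$, and a line $f\in[P]$ with $f\neq\ell_1$ and $f\neq PC$, and set $\Pe_S=([\ell_1]\cup[\ell_2])\setminus\{R,P,C\}$ and $\El_S=([R]\cup[P])\setminus\{\ell_1,\ell_2,f\}$, so that $|\Pe_S|=|\El_S|=2q-2$. The reason for these choices is that the only lines which are not $(\geq 2)$-secants to $[\ell_1]\cup[\ell_2]$ are those of the pencil $[R]$, so keeping $[R]\setminus\{\ell_1,\ell_2\}$ inside $\El_S$ makes them inner; dually, the only points not properly covered by the pencils $[R]\cup[P]$ lie on the common line $RP=\ell_1$, which up to the deletions is contained in $\Pe_S$, so they are inner. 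The incidence $P\in\ell_1$ is exactly what glues the line-side defect onto the point-side structure.

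I would then verify the four properties directly. For P1, the lines skew to $\Pe_S$ are precisely $[R]\setminus\{\ell_1,\ell_2\}$ together with $PC$, and all of these lie in $\El_S$ (here $f\neq PC$ is used), so there is no outer skew line. For P2, the tangents to $\Pe_S$ are the lines $DC$ with $D\in[\ell_1]\setminus\{R,P\}$ and the lines $PB$ with $B\in[\ell_2]\setminus\{R,C\}$; of these only the $DC$'s and the single line $f$ are outer, and each passes through a distinct inner point carrying exactly one of them. P1' and P2' are the exact duals: the uncovered points are $([\ell_1]\setminus\{R,P\})\cup\{f\cap\ell_2\}$, all inner, and the only outer $1$-covered points are $C$ (on the inner line $PC$) and one point of $f$ on each line of $[R]$, never two on one inner line. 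Adding up gives $|S|=(2q-2)+(2q-2)=4q-4$.

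The main difficulty is the bookkeeping that makes the size land on \emph{exactly} $4q-4$ while preserving all four properties. One must track two \emph{global} budgets (at most one outer skew line for P1, at most one outer uncovered point for P1') and two \emph{local} ones (at most one outer tangent through each inner point for P2, at most one outer $1$-covered point on each inner line for P2'), and confirm that every deletion is charged correctly: deleting $R$ and $\ell_1,\ell_2$ is ``free'' because their side-effects stay inner, whereas deleting $C$ and $f$ spends only local budget and must not push any inner point to two outer tangents or any inner line to two outer $1$-covered points. The special choices $P\in\ell_1$ and $f\neq PC$ are precisely what keep these local counts at one. Since only the upper bound is claimed here, no optimality argument is needed; that is deferred to the matching lower bound in Theorem \ref{4q-4}.
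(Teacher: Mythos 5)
Your proposal is correct and follows essentially the same route as the paper: both take $\Pe_S$ to be the points of two lines minus three points (the intersection point and one further point on each line) and $\El_S$ to be two pencils minus three lines, and verify P1, P1', P2, P2' of Proposition \ref{resdef3} directly. The only difference is which third line you delete --- the paper removes the line $PC$ (its $RQ$), leaving one outer skew line and one outer uncovered point, while you remove a different line $f$ through $P$, spending the local budgets of P2 and P2' instead; both variants appear among the $4q-4$ constructions classified in Section \ref{SectCons}, and your verification of the four properties is accurate.
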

\begin{proof}
We give a construction refined from Bill Martin's one of size $4q-1$ (cited in \cite{Bailey}), see Figure \ref{fig1}. Let $P$, $Q$, and $R$ be three arbitrary points in general position. Let $\Pe_S=[PQ]\cup [PR]\setminus\{P,Q,R\}$, and let $\El_S=[P]\cup[R]\setminus{PQ, PR, RQ}$. We will see that $S=\Pe_S\cup\El_S$ is a resolving set by checking the criteria of Proposition \ref{resdef3}.\\
P1: The only outer line skew to $\Pe_S$ is $RQ$.\\
P1': The only outer point uncovered by $\El_S$ is $Q$.\\
P2: As $q\geq 3$, $|[PQ]\cap\Pe_S|=|[PR]\cap\Pe_S|=q-1\geq2$. On a point $A\in[PQ]\setminus{P,Q}$ the only tangent is $AR$ (which is in $\El_S$). On a point $A\in[PR]\setminus{P,R}$ the only tangent is $AQ$.\\
P2': As $q\geq3$, $|[P]\cap\El_S|=|[R]\cap\El_S|=q-1\geq2$. The only point of a line $\ell\in[R]\setminus\{PR,RQ\}$ not covered by $[P]$ is $[\ell]\cap[PQ]$ (which is in $\Pe_S$). The only uncovered point on a line $\ell\in[P]\setminus\{PQ,PR\}$ is $[\ell]\cap RQ$. 

Hence $S$ is a resolving set of size $|\Pe_S|+|\El_S|=2q-2+2q-2=4q-4$.
\end{proof}

Our aim is to show that the metric dimension of a projective plane of order $q\geq23$ is $4q-4$, and to describe all resolving sets of that size.

\textbf{A general assumption:} from now on we suppose that $S=\Pe_S\cup\El_S$ is a resolving set of size $\leq 4q-4$.

\begin{proposition}\label{trivibound}
$2q-5\leq |\Pe_S|\leq 2q+1$, $2q-5\leq |\El_S|\leq 2q+1$.
\end{proposition}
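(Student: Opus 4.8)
The plan is to reduce everything to a single lower bound and get the upper bounds for free. By the general assumption $|\Pe_S|+|\El_S|\le 4q-4$, so once we know $|\Pe_S|\ge 2q-5$ and $|\El_S|\ge 2q-5$, we immediately obtain $|\El_S|=|S|-|\Pe_S|\le (4q-4)-(2q-5)=2q+1$ and, symmetrically, $|\Pe_S|\le 2q+1$. Moreover, the Properties P1--P2' of Proposition \ref{resdef3} are interchanged in pairs (P1$\leftrightarrow$P1', P2$\leftrightarrow$P2') when we swap the roles of points and lines and of $\Pe_S$ and $\El_S$, and the size hypothesis is symmetric. Hence it suffices to prove one lower bound, say $|\Pe_S|\ge 2q-5$, the bound $|\El_S|\ge 2q-5$ following by running the identical counting with P1', P2' in place of P1, P2.

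The first step is to turn P1 and P2 into a statement about secants of $\Pe_S$. There are $q^2+q+1-|\El_S|$ outer lines; by P1 at most one of them is skew to $\Pe_S$, and since every tangent meets $\Pe_S$ in exactly one (inner) point, P2 gives at most one outer tangent through each point of $\Pe_S$, hence at most $|\Pe_S|$ outer tangents in total. Consequently the number of outer lines meeting $\Pe_S$ in at least two points is at least $q^2+q+1-|\El_S|-1-|\Pe_S|\ge q^2-3q+4$, using $|\Pe_S|+|\El_S|\le 4q-4$. In particular, writing $p:=|\Pe_S|$, the set $\Pe_S$ has at least $q^2-3q+4$ secants.

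The second step is to bound the number of secants of a $p$-point set from above and compare. With $n_i$ denoting the number of $i$-secants, double counting incident (point, line) pairs and ordered pairs of points of $\Pe_S$ yields $\sum_{i\ge 2} i\,n_i\le p(q+1)$ and $\sum_{i\ge 2} i(i-1)\,n_i=p(p-1)$. Maximising the secant count $\sum_{i\ge 2} n_i$ subject to these two constraints is a short convexity/linear-programming problem whose extremal profile is a bounded number of full lines together with a general-position remainder; it gives an upper bound of the form $\tfrac{p(q^2+3q+3-p)}{2(q+1)}$. Forcing this to be at least the $q^2-3q+4$ secants guaranteed by the first step constrains $p$ from below, and carrying the estimate through gives $p\ge 2q-5$.

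The main obstacle is pinning down the exact constant rather than an approximate $2q-O(1)$. The purely linear (first-moment) estimate alone only yields $p\gtrsim 2q-7$, and the two-moment relaxation above only yields $p\ge 2q-6$, because its fractional optimum---two mutually intersecting full lines plus a general-position remainder---is not realisable in the plane. Recovering the final unit to reach exactly $2q-5$ requires feeding back the geometry: two full lines must share a point, the $n_i$ are integers, and no arc has more than $q+2$ points, so the loss coming from long secants can be accounted for precisely. I expect this bookkeeping, rather than the overall strategy, to be the delicate part.
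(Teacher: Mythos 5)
Your reduction to a single lower bound is fine: the duality symmetry of Proposition \ref{resdef3} and the identity $|\El_S|\le(4q-4)-|\Pe_S|$ do exactly what you claim. The problem is in the core counting step, and by your own admission your argument as written only reaches $p\ge 2q-6$: the two-moment LP bound $\sum_{i\ge2}n_i\le \frac{p(q^2+3q+3-p)}{2(q+1)}$ compared against $q^2-3q+4$ secants does not exclude $p=2q-6$, and the "bookkeeping" you invoke to recover the last unit is not carried out. It is also not clear it would work as described: at $p=2q-6$ the fractional optimum wants roughly two full $(q+1)$-secants, but $\Pe_S$ cannot even contain two full lines (that needs $2q+1$ points), so the genuine extremal analysis involves near-full lines and the integrality/arc arguments you gesture at would have to be checked to close a gap of exactly one. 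Since the constant matters downstream (the paper repeatedly uses $|\El_S|\le 2q+1$, e.g.\ in Propositions \ref{secantlength}, \ref{standard} and \ref{k+l}), this is a genuine gap, not a cosmetic one.

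The loss occurs where you decouple the two counts. The paper keeps the number $t$ of outer tangents as a parameter in \emph{both} counts simultaneously: there are at least $q^2+q+1-1-t-|\El_S|$ outer $(\ge2)$-secants, and these lines carry at most $|\Pe_S|(q+1)-t$ incidences with $\Pe_S$, because each outer tangent consumes exactly one incidence (not two). The double count $2(q^2+q-t-|\El_S|)\le|\Pe_S|(q+1)-t$ together with $t\le|\Pe_S|$ (Property P2) and $|\Pe_S|+|\El_S|\le 4q-4$ gives $q|\Pe_S|\ge 2(q^2-3q+4)$, hence $|\Pe_S|\ge 2q-6+8/q$ and $|\Pe_S|\ge 2q-5$ by integrality. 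In other words, the "purely linear" estimate does give $2q-5$ once each tangent is charged one incidence rather than being discarded; your substitution of the worst case $t=|\Pe_S|$ in Step 1, followed by an incidence bound in Step 2 that ignores tangents entirely, is precisely what costs you the unit you then try to recover with the second moment.
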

\begin{proof}
Let $t$ denote the number of tangents that are not in $\El_S$. By Property $2$, $t\leq|\Pe_S|$. Recall that there may be at most one skew line that is not in $\El_S$ (Property 1). Then double counting the pairs of $\{(P,\ell)\colon P\in\Pe_S, P\in[\ell], |[\ell]\cap\Pe_S|\geq2\}$ we get $2(q^2+q+1-1-t-|\El_S|)\leq |\Pe_S|(q+1)-t$, whence 
\[ q|\Pe_S|\geq 2(q^2+q-|\El_S|)-t-|\Pe_S|\geq 2(q^2+q-(|\El_S|+|\Pe_S|))\geq 2(q^2-3q+4),\]
thus $|\Pe_S|\geq 2q-6+8/q$, and as it is an integer, $|\Pe_S|\geq 2q-5$. Dually, $|\El_S|\geq 2q-5$ also holds. From $|\Pe_S|+|\El_S|\leq 4q-4$ the upper bounds follow.
\end{proof}

This immediately gives $|S|\geq 4q-10$. We remark that a somewhat more careful calculation shows $2q-4\leq|\Pe_S|\leq 2q$ and hence $|S|\geq 4q-8$, provided that $q\geq 11$, but we don't need to use it.

\begin{remark}\label{Fano}
The metric dimension of the Fano plane is five.
\end{remark}
\begin{proof}
Suppose $|\Pe_S|\leq2$. Using the notaions of the proof of Proposition \ref{trivibound}, we see $2(7-1-|\Pe_S|-|\El_S|)\leq |\Pe_S|$ (as there is at most one two-secant through any point). This yields $|\El_S|\geq 6-3|\Pe_S|/2$, whence $|\Pe_S|+|\El_S| \geq 6-|\Pe_S|/2\geq 5$. Figure \ref{fig1} shows a resolving set of size five in the Fano plane.
\end{proof}

\begin{figure}
\begingroup\makeatletter\ifx\SetFigFontNFSS\undefined%
\gdef\SetFigFontNFSS#1#2#3#4#5{%
  \reset@font\fontsize{#1}{#2pt}%
  \fontfamily{#3}\fontseries{#4}\fontshape{#5}%
  \selectfont}%
\fi\endgroup%

\renewcommand{\mb}{\makebox(0,0)}
\renewcommand{\cd}{3}
\renewcommand{\dll}{2}

\setlength{\unitlength}{1mm}
\renewcommand{\dashlinestretch}{40}

\SetFigFontNFSS{12}{12}{\rmdefault}{\mddefault}{\itdefault}
\thinlines

\begin{picture}(160,48)(-80,0)

% Fano

\filltype{white}
\put(-45,15){\circle*{30}}

\Thicklines
\drawline(-71,-0.15)(-32,22.35)
\drawline(-19,-0.15)(-58,22.35)
\drawline(-71,0.15)(-32,22.65)
\drawline(-19,0.15)(-58,22.65)

\thinlines
\drawline(-71,0)(-45,45)
\drawline(-71,0)(-19,0)
\drawline(-45,0)(-45,45)
\drawline(-19,0)(-45,45)

\filltype{white}
\put(-58,22.5){\circle*{\cd}}
\put(-32,22.5){\circle*{\cd}}
\put(-45,15){\circle*{\cd}}
\put(-45,0){\circle*{\cd}}

\filltype{black}
\put(-71,0){\circle*{\cd}}
\put(-19,0){\circle*{\cd}}
\put(-45,45){\circle*{\cd}}

% Bill Martin refined

\put(18,46){\mb{$P$}}
\put(72,46){\mb{$Q$}}
\put(18,0){\mb{$R$}}

\dashline{\dll}(23,0)(67,44)
\dashline{\dll}(23,0)(23,44)
\dashline{\dll}(23,44)(67,44)

\drawline(23,0)(27,15)
\drawline(23,0)(30,14)
\drawline(23,0)(33,13.2)

\drawline(23,44)(28,31)
\drawline(23,44)(32,35)
\drawline(23,44)(36,39)

\filltype{white}
\put(23,0){\circle*{\cd}}
\put(23,44){\circle*{\cd}}
\put(67,44){\circle*{\cd}}

\filltype{black}
\put(23,11){\circle*{\cd}}
\put(23,22){\circle*{\cd}}
\put(23,33){\circle*{\cd}}
\put(34,44){\circle*{\cd}}
\put(45,44){\circle*{\cd}}
\put(56,44){\circle*{\cd}}

\end{picture}
\caption{\label{fig1}On the left the black points and the thick lines form a resolving set of size five in the Fano plane. On the right the black points and the continuous lines form a resolving set of size $4q-4$.}
\end{figure}

\textbf{One more general assumption:} by duality we may assume that $|\Pe_S|\leq |\El_S|$. Thus, as $|\Pe_S|+|\El_S|\leq 4q-4$, $|\Pe_S|\leq2q-2$ follows.

\begin{proposition}\label{secantlength}
Let $q\geq 23$. Then any line intersects $\Pe_S$ in either $\leq 4$ or $\geq q-4$ points.
\end{proposition}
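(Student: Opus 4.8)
The plan is to argue by contradiction: suppose some line $\ell$ meets $\PS$ in $k$ points with $5\le k\le q-5$. Write $h=q+1-k$ for the number of \emph{holes} of $\ell$ (points of $\ell$ outside $\PS$) and $r=|\PS|-k$ for the number of points of $\PS$ off $\ell$; by Proposition~\ref{trivibound} and the running assumption $|\PS|\le 2q-2$ both $h$ and $r$ are tightly controlled. The idea is to count tangents to $\PS$ by looking through the holes. Fix a hole $Q$. The $q$ lines through $Q$ other than $\ell$ partition the $r$ points of $\PS$ lying off $\ell$, so if $x_Q,y_Q,z_Q$ denote the numbers of them that are, respectively, tangent, $(\geq2)$-secant and skew to $\PS$, then $x_Q+y_Q+z_Q=q$ and $x_Q+2y_Q\le r$.

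First I would extract the two global constraints that make the count work. Every line skew to $\PS$ meets $\ell$ in a hole (it cannot pass through a point of $\PS$ on $\ell$), and every tangent counted by some $x_Q$ has its unique point of $\PS$ off $\ell$ and hence also meets $\ell$ in a single hole; thus the sums $\sum_Q z_Q$ and $\sum_Q x_Q$ count \emph{distinct} skew, respectively tangent, lines. By P1 of Proposition~\ref{resdef3} there is at most one outer skew line, and by P2 there is at most one outer tangent through each of the $r$ points off $\ell$, so $\sum_Q z_Q\le 1+a$ and $\sum_Q x_Q\le r+b$, where $a$ and $b$ count the inner lines of $\LS$ that are skew, respectively tangent, to $\PS$ — both at most $|\LS|$, and, crucially, $a+b\le|\LS|$ since a line is skew or tangent but not both. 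Feeding the per-hole inequality $z_Q\ge q-(r+x_Q)/2$ into $\sum_Q z_Q\le 1+a$ converts the scarcity of skew lines into a lower bound $\sum_Q x_Q\ge h\,(2q-|\PS|+k)-2(1+a)$.

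Combining this with the upper bound $\sum_Q x_Q\le r+b$ and using $a+b\le|\LS|$ yields an inequality of the shape $h\,(2q-|\PS|+k)\le r+2|\LS|+2$, i.e.\ $k(q+1-k)\lesssim|\PS|+2|\LS|$ up to lower-order terms. The left-hand side is quadratic in $k$ and is smallest at the ends of the range $5\le k\le q-5$, where it is about $5q$, whereas the right-hand side is linear in $q$; substituting the sharp ranges $2q-5\le|\PS|\le2q-2$ and $|\PS|+|\LS|\le 4q-4$ shows the inequality fails once $q$ is large enough, which is the contradiction. I expect the main obstacle to be the bookkeeping for the inner lines of $\LS$: a priori they may themselves be skew or tangent and thereby inflate both $\sum_Q z_Q$ and $\sum_Q x_Q$, so the entire argument hinges on charging each inner line only once (the observation $a+b\le|\LS|$) so that this linear error term stays strictly below the quadratic main term. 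Squeezing the forbidden interval down to exactly $(4,q-4)$ then amounts to tracking these constants carefully and separately checking the harmless case $\ell\in\LS$.
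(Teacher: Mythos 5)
Your argument is essentially the paper's own proof: your per-hole inequality $z_Q\ge q-(r+x_Q)/2$ is exactly the paper's $x\le 2s(P)+t(P)-2$, and combining $\sum_Q z_Q\le 1+a$, $\sum_Q x_Q\le r+b$ with $a+b\le|\LS|$ reproduces the paper's bound $2s+t\le 2|\LS|+|\PS|-x+2$, leading to the identical quadratic $k^2-qk+4q-3\ge 0$ whose failure on $[5,q-5]$ for $q\ge 23$ is checked at the two endpoints by convexity. One caution on your closing heuristic: ``LHS about $5q$ versus a linear RHS'' does not by itself settle $k=5$, since the right-hand side is about $6q$; the contradiction at that endpoint genuinely needs the factor $k+2$ (not $k$) on the left, coming from $2q-|\PS|\ge 2$, after which $(k+2)(q+1-k)>6q-1-k$ holds precisely for $q\ge 23$ --- which your displayed inequality $h\,(2q-|\PS|+k)\le r+2|\LS|+2$ does deliver once the constants are tracked.
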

\begin{proof}
Suppose that $|[\ell]\cap\Pe_S|=x$, $2\leq x\leq q$. For a point $P\in[\ell]\setminus\Pe_S$, let $s(P)$ and $t(P)$ denote the number of skew or tangent lines to $\Pe_S$ through $P$, respectively; moreover, denote by $s$ the number of skew lines, and let $t$ denote the total number of tangents intersecting $\ell$ outside $\Pe_S$. Then counting the points of $\Pe_S$ on $\ell$ and the other lines through $P$ we get
$2q-2\geq |\Pe_S|\geq x + t(P) + 2(q-t(P)-s(P))$, equivalently, $x\leq 2s(P)+t(P)-2$. Adding up the inequalities for all $P\in[\ell]\setminus{\Pe_S}$ we obtain
\[(q+1-x)x\leq 2s+t-2(q+1-x).\]
Now, Proposition \ref{resdef3} yields that $s\leq |\El_S|+1$ and $s+t\leq (1+|\Pe_S|-x)+|\El_S|$ (here first we estimate the skew / tangent lines in question that are outside $\El_S$, then the rest), whence $2s+t\leq 2|\El_S|+|\Pe_S|-x+2$. Combined with the previous inequality we obtain
\[x^2-qx+4q-3\geq 0.\]
Assuming $q\geq 23$, the left hand side is negative for $x=5$ and $x=q-5$, therefore, as $x$ is an integer, we conclude that $x\leq 4$ or $x\geq q-4$.
\end{proof}

\begin{proposition}\label{standard}
Let $q\geq 23$. Then there exist two lines intersecting $\Pe_S$ in at least $q-4$ points.
\end{proposition}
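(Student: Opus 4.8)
The plan is to assume the contrary — that at most one line meets $\Pe_S$ in at least $q-4$ points — and to force a contradiction with the resolving conditions of Proposition \ref{resdef3}. Write $a_i$ for the number of $i$-secants of $\Pe_S$. Two global identities are available, from counting incident point–line pairs and counting collinear point-pairs:
\[ \sum_i i\,a_i = |\Pe_S|(q+1), \qquad \sum_i \binom{i}{2}\, a_i = \binom{|\Pe_S|}{2}. \]
Proposition \ref{secantlength} makes these usable: every nonzero term has $i\le 4$ or $i\ge q-4$, so under our assumption only $a_0,\dots,a_4$ and at most one ``long'' value $a_m$ (with $m\ge q-4$) occur. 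Finally Proposition \ref{resdef3} controls the thin secants: P1 gives $a_0\le |\El_S|+1$ and P2 gives $a_1\le |\Pe_S|+|\El_S|$, so both are $O(q)$, whereas $|\Pe_S|,|\El_S|$ are $\Theta(q)$ by Proposition \ref{trivibound}.

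First I would dispose of the case with no long line. Here $a_0,\dots,a_4$ satisfy the two identities together with the cap $a_1\le |\Pe_S|+|\El_S|$, and I would extract a lower bound on $a_0$ by maximizing the number $a_1+a_2+a_3+a_4$ of lines meeting $\Pe_S$. Eliminating $a_1,a_2$ via the identities gives $a_1+a_2+a_3+a_4 = |\Pe_S|(q+1)-\binom{|\Pe_S|}{2}+a_3+3a_4$, and the cap on $a_1$ forbids loading all pairs onto $2$-secants; optimizing the residual $a_3+3a_4$ then yields $a_0$ of order $\tfrac14 q^2 - O(q)$, dwarfing $a_0\le |\El_S|+1\le 2q+2$. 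This excludes the no-long-line case for $q$ well below $23$.

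For a single long line $\ell_0$ with $m$ points I would run the same optimization after subtracting the contribution $\bigl(m,\binom m2\bigr)$ of $\ell_0$ from both identities; the analogous estimate produces a contradiction whenever $m\le q-1$. The remaining narrow range $m\in\{q,q+1\}$ is where $\ell_0$ is (almost) full and the relaxation goes slack, because it no longer ``sees'' that every skew line must avoid $\ell_0$. There I would argue structurally on the off-line set $\Pe_S':=\Pe_S\setminus[\ell_0]$, which by Proposition \ref{secantlength} is met by every line in at most $4$ points. A skew line must pass through one of the $q+1-m\le 1$ holes of $\ell_0$, so there are at most $q$ of them; and every line through a point of $[\ell_0]\cap\Pe_S$ that misses $\Pe_S'$ is a tangent, whence the skew lines and these tangents together number exactly $q^2+q-M$, where $M$ is the number of lines meeting $\Pe_S'$. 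Bounding $M$ by the same convexity estimate applied to the small set $\Pe_S'$ makes $q^2+q-M=\Omega(q^2)$, once more contradicting the $O(q)$ resolving bounds on skew lines and tangents.

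The hardest point is the junction $m\approx q$: the counting argument weakens exactly as the structural one strengthens, and it is the demand that the two ranges overlap — the optimization reaching up to $m=q-1$ while the structural count reaches down to $m=q-1$ — that pins the numerical threshold. Checking this overlap, and its robustness over $2q-5\le|\Pe_S|\le 2q-2$ and $|\El_S|\le 2q+1$, is a finite though delicate verification rather than a conceptual obstacle, and it is precisely what forces the hypothesis $q\ge 23$.
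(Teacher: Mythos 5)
Your proposal is correct in outline and rests on the same engine as the paper's proof: P1 and P2 cap the skew and tangent lines at $O(q)$, so all but $O(q)$ of the $q^2+q+1$ lines are $(\geq 2)$-secants, hence $(\leq 4)$-secants apart from at most one exceptional line, and the two standard identities then over-constrain $|\Pe_S|\approx 2q$. I checked the numbers you defer: in the no-long-line case your optimization gives $a_0\geq q^2/4-O(q)$ (a contradiction already for $q\geq 14$ at $b=2q-2$); with a long $m$-secant it degrades to $a_0\geq q^2/8-O(q)$, and the binding subcase $m=q$ needs exactly $q\geq 23$ once the sharp caps $a_0\leq|\El_S|+1$ and $a_1\leq|\Pe_S|+|\El_S|$ are used; your structural fallback on $\Pe_S'=\Pe_S\setminus[\ell_0]$ for $m\in\{q,q+1\}$ also works, with room to spare (since every secant to $\Pe_S'$ has length at most $4$, one gets $M\leq|\Pe_S'|(q+1)-\tfrac14|\Pe_S'|(|\Pe_S'|-1)$, forcing at least $q^2/4-O(q)$ skew-or-tangent lines and needing only $q\geq 11$). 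Where you genuinely differ from the paper is packaging: rather than splitting on $m$ and treating the near-full line separately, the paper applies the single nonnegative combination $\sum_{i=2}^4(i-2)(4-i)n_i\geq0$ to the short secants other than the exceptional line $\ell$, bounds that line's contribution crudely via $x\leq q+1$, and obtains one quadratic in $b=|\Pe_S|$ that is increasing for $b\leq 2q-2$ and negative at $b=2q-2$ whenever $q\geq19$ --- this kills every case, including $m=q+1$, in one stroke and simultaneously settles the uniformity over $2q-5\leq|\Pe_S|\leq 2q-2$ that you leave as a ``delicate verification.'' So your worry about the junction at $m\approx q$ is real for your relaxation but evaporates under the paper's choice of combination; note also that the hypothesis $q\geq23$ in the statement is inherited from Proposition \ref{secantlength} rather than from this counting step, which only needs $q\geq 19$.
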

\begin{proof}
By Proposition \ref{secantlength} every line is either a $\leq4$ or a $\geq(q-4)$-secant. Suppose to the contrary that every line intersects $\Pe_S$ in at most $4$ points except possibly one line $\ell$; let $x=|[\ell]\cap\Pe_S|\geq2$. Note that $x\leq 4$ is also possible. 
Let $n_i$ denote the number of $i$-secants to $\Pe_S$ different from $\ell$. To be convinient, let $n_0=s$ and $n_1=t$, and let $b=|\Pe_S|$. Then the standard equations yield
\begin{eqnarray*}
\sum_{i=2}^4{n_i} &=& q^2+q+1-s-t-1,\\
\sum_{i=2}^4{in_i} &=& (q+1)b-t-x,\\
\sum_{i=2}^4{i(i-1)n_i} &=& b(b-1)-x(x-1).
\end{eqnarray*}
Thus
\[0\leq\sum_{i=2}^{4}{(i-2)(4-i)n_i}=-\sum_{i=2}^{4}{i(i-1)n_i}+5\sum_{i=2}^{4}{in_i}-8\sum_{i=2}^{4}{n_i}=\]
\[-b^2+(5q+6)b+x(x-6)+3(s+t)+5s-8(q^2+q).\]
Substituting $s+t\leq |\Pe_S|+|\El_S|+1\leq 4q-3$, $s\leq|\El_S|+1\leq 2q+2$ and $x\leq q+1$, we get
\[0\leq -b^2+(5q+6)b-7q^2+10q-4.\]
By duality we assumed $b=|\Pe_S|\leq 2q-2$. For $b=2q-2$, the right hand side is $-q^2+20q-20$, which is negative whenever $q\geq 19$. Hence $b>2q-2$, a contradiction.
\end{proof}

Now we see that there exist two distinct lines $e$, $f$ such that $|[e]\cap(\Pe_S\setminus{[e]\cap [f]})|=q-l$ and $|[f]\cap(\Pe_S\setminus{[e]\cap [f]})|=q-k$ with $k\leq l\leq 5$.

Let $e\cap f=P$ and denote the set of points of $\Pe_S$ outside $e\cup f$ by $Z$.

\begin{proposition}\label{k+l}
Suppose $q\geq 23$. Then $k+l\leq 3$. Moreover, $l=3$ is not possible.
\end{proposition}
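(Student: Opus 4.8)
The plan is to localize almost all of $\Pe_S$ on $e\cup f$ and then count tangents through the ``holes'' left on $e$ and $f$. First I would record the basic size estimates. Since $\Pe_S$ meets $e$ and $f$ in $q-l$ and $q-k$ points off $P$, we have $|\Pe_S|=(q-l)+(q-k)+\varepsilon+|Z|$, where $\varepsilon=1$ if $P\in\Pe_S$ and $\varepsilon=0$ otherwise. Combined with $|\Pe_S|\le 2q-2$ (the standing assumption $|\Pe_S|\le|\El_S|$ together with Proposition \ref{trivibound}) this gives $|Z|\le k+l-2-\varepsilon$, so $Z$ is tiny; and $|S|\le 4q-4$ forces $|\El_S|\le 2q+k+l-4$.

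For the main counting, call a point of $[e]\setminus(\Pe_S\cup\{P\})$ a \emph{hole} of $e$ (there are $l$ of them) and define holes of $f$ similarly ($k$ of them). Fix a hole $A$ of $e$. Of the $q-k$ lines joining $A$ to the points of $[f]\cap\Pe_S$, at most $|Z|$ can meet $Z$, so at least $(q-k)-|Z|$ of them are tangents to $\Pe_S$, each touching $\Pe_S$ only at its point of $f$. By Property 2 of Proposition \ref{resdef3} each inner point of $f$ carries at most one outer tangent, so among these tangents at most $q-k$ are outer and the rest lie in $\El_S$; summing over the $l$ holes of $e$ (the lines obtained being pairwise distinct) yields
\[l\big((q-k)-|Z|\big)\ \le\ (q-k)+|\El_S|.\]
Running the same count with $e$ and $f$ interchanged and adding the two inequalities gives $(k+l)q-2kl-(k+l)|Z|\le 4q-4-|Z|$. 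Feeding in $|Z|\le k+l-2$ and simplifying, the left side outgrows the right for every $q\ge 23$ as soon as $k+l\ge 5$, which disposes of all those cases at once.

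The residual possibilities are $k+l=4$, i.e.\ $(k,l)\in\{(0,4),(1,3),(2,2)\}$, together with $(k,l)=(0,3)$ for the last clause; for these the crude count is tight up to lower-order terms, so a finer argument is required. Here I would track \emph{which} tangent is allowed to be the outer one at each inner point of $f$: for two distinct holes $A_1,A_2$ of $e$ and an inner point $B$ of $f$, both $A_1B$ and $A_2B$ are tangents once $Z$ is avoided, so Property 2 forces at least one of them into $\El_S$. This makes $\El_S$ contain $q-O(1)$ lines through the (few, and collinear) holes of $e$, i.e.\ almost all of $\El_S$ is squeezed onto a handful of points of $e$. One then turns to the line side: such a concentration leaves most points off $e$ covered by very few lines of $\El_S$, which contradicts Properties 1' or 2' of Proposition \ref{resdef3} (valid because $\El_S$ resolves $\Pe$). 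The complementary constraint comes from Property 1: joining a hole of $e$ to a hole of $f$ produces skew lines, at most one of which may be outer, again forcing extra lines into $\El_S$ and sharpening the same contradiction.

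The main obstacle is exactly this last step. The pure tangent count is tight for $k+l\le 4$ and for $(0,3)$, so these cases cannot be settled by counting alone; one must make the point side (Properties 1, 2) interact with the line side (Properties 1', 2') and exploit the rigidity produced by $\El_S$ being forced onto a few collinear holes. The delicate bookkeeping — distinguishing the subcases according to whether $P\in\Pe_S$ and according to the positions of the points of $Z$, while keeping the ``one outer tangent per inner point'' ledger consistent with the dual covering conditions — is where the real work of ruling out $k+l=4$ and $l=3$ lies.
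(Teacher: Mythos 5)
Your counting of tangents from the holes of $e$ to the inner points of $f$ (and vice versa) is correct and is exactly two of the three families used in the paper's proof; as you observe, it disposes of $k+l\geq 5$ for $q\geq 23$. But the proof is genuinely incomplete: the cases $k+l=4$ and $(k,l)=(0,3)$ are left as a vague sketch ("one must make the point side interact with the line side\dots"), and your diagnosis that "these cases cannot be settled by counting alone" is wrong. The missing ingredient is a third, disjoint family of lines forced into $\El_S$, namely the lines through $P=[e]\cap[f]$. Every line of $[P]\setminus\{e,f\}$ meets $\Pe_S$ only in $\{P\}\cup Z$, so at least $q-1-|Z|$ of them are skew to $\Pe_S$ (if $P\notin\Pe_S$) or tangent at $P$ (if $P\in\Pe_S$); by Properties 1 and 2 at most one of these may be outer, so at least $q-2-|Z|$ lie in $\El_S$. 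Since none of these lines belongs to your two bunches (those meet $e$ and $f$ away from $P$), this raises the lower bound on $|\El_S|$ from roughly $(k+l-2)q$ to $(k+l-1)q$, which already exceeds $|\El_S|\leq 2q+1$ when $k+l\geq 4$ and $|Z|\leq k+l-2$. The same extra family kills $l=3$, $k=0$: there $|Z|\leq 1$, and the lines through $P$ together with the tangents through the three holes of $e$ give at least $(q-3)+(2q-3)=3q-6>2q+1$ lines in $\El_S$.

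A secondary point: your combined inequality with right-hand side $4q-4-|Z|$ is only valid because the two bunches of tangents are disjoint (so their contributions to $\El_S$ add up against a single $|\El_S|$ rather than $2|\El_S|$); this disjointness is true but you should state it, since literally "adding the two inequalities" would give a weaker bound that fails already at $k+l=5$ and $6$.
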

\begin{proof}

Then there are at least $q-1-|Z|$ skew or tangent lines through $P$ depending on $P\notin\Pe_S$ or $P\in\Pe_S$, respectively,
from which at most one may not be in $\El_S$, hence we found $\geq q-2-|Z|$ lines in $[P]\cap\El_S$. 
Among the $k(q-l)$ lines that connect one of the $k$ points in $[f]\setminus(\Pe_S\cup\{P\})$ with one of the $q-l$ points in $[e]\cap(\Pe_S\setminus\{P\})$ at most $k|Z|$ are not tangents to $\Pe_S$, but through a point in $|[e]\cap(\Pe_S\setminus{[e]\cap [f]})|=q-l$ only one tangent may not be in $\El_S$. 
Hence we find another $\geq k(q-l)-k|Z|-(q-l)=(k-1)(q-l)-k|Z|$ lines in $\El_S$. Interchanging the role of $e$ and $f$, we find yet another $\geq (l-1)(q-k)-l|Z|$ lines in $\El_S$. These three disjoint bunches give $|\El_S|\geq (k+l-1)q-(k+l+1)|Z|+(k+l)-2kl-2$.

Now as $(q-k)+(q-l)+|Z|\leq|\Pe_S|\leq 2q-2$, $|Z|\leq k+l-2$ holds, whence $|\El_S|\geq (k+l-1)q-(k^2+l^2+4kl-2(k+l))$.

We want to use that $2kl-2(k+l)\leq (k+l)^2-2(k+l)^{3/2}$, which is equivalent with $k^2+l^2\geq2(k+l)(\sqrt{k+l}-1)$. As $x\mapsto x^2$ is convex, $k^2+l^2\geq 2\left(\frac{k+l}{2}\right)^2=(k+l)^2/2\geq 2(k+l)(\sqrt{k+l}-1)$, where the last inequality follows from $(\sqrt{k+l}-2)^2\geq0$.

Therefore, $k^2+l^2+4kl-2(k+l)=(k+l)^2+2kl-2(k+l)\leq 2(k+l)^2-2(k+l)^{3/2}$, thus $2q+1\geq|\El_S|\geq (k+l-1)q-2(k+l)^2+2(k+l)^{3/2}$, hence 
\[\frac{2(k+l)^2-2(k+l)^{3/2}+1}{k+l-3}\geq q\geq23.\] 
The left hand side as a function of $k+l$ on the closed interval $[4,10]$ takes its maximum at $k+l=10$, and its value is $<20$. Hence $k+l<4$, i.e., $k+l\leq 3$.

Now suppose that $l=3$ (hence $k=0$). Recall that we may assume $|\Pe_S|\leq 2q-2$, hence $|Z|\leq1$. Then, as above, the number of lines $\in\El_S$ through $P$ and on the three points in $e\setminus{(\Pe_S\cup \{P\})}$ would be at least $q-3+2q-3=3q-6$, but $|\El_S|\leq 2q+1$, a contradiction.
\end{proof}

Right now we see that if $q\geq23$, then there are two lines containing at least $q-1$ and $q-2$ points (which also implies $|\Pe_S|\geq 2q-3$, hence $|\El_S|\leq 2q-1$ as well). Next we show this dually for lines. The dual arguments of the previous ones would also work, but we used duality to assume $|\Pe_S|\leq 2q-2$ to keep the technical bound on $q$ as low as possible, hence we make further considerations. Note that at most one point of $\Pe_S$ may not be covered by $e$ and $f$, whence Property A yields $|[P]\cap\El_S|\geq q-3$.

\begin{lemma}\label{FPL}
Let $e$ and $f$ be two distinct lines, $\{P\}=[e]\cap[f]$, $\{R_1,\ldots,R_q\}=[e]\setminus\{P\}$, $\{Q_1,\ldots,Q_q\}=[f]\setminus\{P\}$, $L\subset\El\setminus\{[P]\}$. Let $r_i=|L\cap[R_i]|$, $d_i=\max\{|L\cap[Q_i]|-1,0\}$, $m=d_1+\ldots+d_q$. Then for the number $C$ of points in $\Pe\setminus{([e]\cup[f])}$ covered by $L$, 
\[C\leq |L|(q-1)-r_i(|L|-r_i)+m\leq |L|q-r_i(|L|-r_i+1)\] holds, where $i\in\{1,\ldots,q\}$ is arbitrary.
\end{lemma}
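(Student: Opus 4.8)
The plan is to count incidences between the lines of $L$ and the points off $e\cup f$ in two ways, expressing $C$ as the total incidence count minus a ``concurrency excess.'' Since every $\ell\in L$ avoids $P$ and is distinct from both $e$ and $f$, it meets each of $e,f$ in exactly one point, and these two points are distinct (their only possible common point is $P$). Hence each $\ell\in L$ carries exactly $q-1$ of its $q+1$ points outside $e\cup f$, so the number of incident pairs $(\ell,X)$ with $\ell\in L$ and $X\in\Pe\setminus([e]\cup[f])$ equals $|L|(q-1)$. Writing $m_L(X)=|L\cap[X]|$ for the number of lines of $L$ through a point $X$, this reads $|L|(q-1)=\sum_X m_L(X)$, the sum over all $X\in\Pe\setminus([e]\cup[f])$, while $C=\sum_X\mathbf 1[m_L(X)\ge1]$. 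Subtracting, $|L|(q-1)-C=\sum_X(m_L(X)-1)$ (now over the covered points off $e\cup f$), so the first inequality is equivalent to the lower bound $\sum_X(m_L(X)-1)\ge r_i(|L|-r_i)-m$.

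To get this bound I would fix $i$ and split $L=L_i\cup L_i'$, where $L_i$ is the set of $r_i$ lines of $L$ through $R_i$ and $L_i'$ the remaining $|L|-r_i$. The key geometric fact is that the lines of $L_i$ are concurrent at $R_i\in[e]$, so any two of them meet only at $R_i$; hence \emph{at most one} line of $L_i$ passes through any point off $e\cup f$. I would then examine the $r_i(|L|-r_i)$ ordered pairs $(\ell,\ell')\in L_i\times L_i'$ and locate $\ell\cap\ell'$. Such a point never lies on $e$ (that would force $\ell'$ through $R_i$, which is excluded), and it lies on $f$ exactly when $\ell'$ passes through $\ell\cap f\in\{Q_1,\dots,Q_q\}$. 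Calling the latter pairs \emph{bad} and the rest \emph{good}, every good pair produces a point $X$ off $e\cup f$ lying on a line of $L_i$ and on a line of $L_i'$, so $m_L(X)\ge2$.

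To bound the bad pairs by $m$: for fixed $\ell\in L_i$ put $Q_j=\ell\cap f$; the lines $\ell'\in L_i'$ through $Q_j$ are exactly the lines of $L$ through $Q_j$ other than $\ell$, so there are $|L\cap[Q_j]|-1=d_j$ of them. Distinct lines of $L_i$ meet $f$ in distinct points (again by concurrency at $R_i$), so the indices $j$ that arise are distinct, and the total number of bad pairs is at most $\sum_j d_j=m$; thus there are at least $r_i(|L|-r_i)-m$ good pairs. Finally, writing $\mu_i'(X)$ for the number of lines of $L_i'$ through $X$, the number of good pairs meeting at a point $X$ off $e\cup f$ equals (number of $L_i$-lines through $X$)$\cdot\mu_i'(X)$, which is $\mu_i'(X)$ if an $L_i$-line passes through $X$ and $0$ otherwise; in the former case $m_L(X)-1=\mu_i'(X)$. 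Summing over $X$ gives $\sum_X(m_L(X)-1)\ge r_i(|L|-r_i)-m$, proving the first inequality.

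For the second inequality, a direct expansion shows it is equivalent to $m\le|L|-r_i$. Every line of $L$ meets $f$ in exactly one $Q_j$, so $\sum_j|L\cap[Q_j]|=|L|$, whence $m=\sum_j d_j=|L|-a$ where $a$ is the number of $Q_j$ covered by $L$. The $r_i$ lines of $L$ through $R_i$ meet $f$ in $r_i$ distinct points $Q_j$, so $r_i\le a$ and hence $m=|L|-a\le|L|-r_i$. The main obstacle is the middle bookkeeping step: correctly classifying each pair-intersection as lying on $e$, on $f$, or off $e\cup f$, and controlling the ``on $f$'' pairs by $m$ through the injectivity of $\ell\mapsto\ell\cap f$ on $L_i$; once this is set up, both inequalities follow cleanly.
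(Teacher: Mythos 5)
Your proof is correct and rests on the same core mechanism as the paper's: splitting $L$ into the pencil at $R_i$ and the remaining $|L|-r_i$ lines, using the $r_i(|L|-r_i)$ pairwise intersections between the two parts, and charging the exceptional intersections lying on $f$ to $m$ via the injectivity of $\ell\mapsto\ell\cap f$ on the pencil. The only difference is presentational — you lower-bound the excess $|L|(q-1)-C=\sum_X(m_L(X)-1)$ by a global double count of good pairs, whereas the paper upper-bounds $C$ by sequentially counting the new points each line through $R_j$ ($j\neq i$) contributes — and both yield the same bounds.
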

\begin{proof}
Without loss of generality we may assume $i=1$. Let $d(R_j)$ ($2\leq j\leq q$) denote the number of lines in $L\cap[R_j]$ that intersect a line of $[R_1]\cap L$ on $f$. Then $\sum_{j=2}^q d(R_j)\leq m$ (count the lines in question through the points of $f$). Each line through $R_1$ covers $q-1$ points of $\Pe\setminus{([e]\cup[f])}$, while a line $h$ through $R_j$ ($2\leq j\leq q$) covers $q-1-r_1+\varepsilon$ new points, where $\varepsilon=1$ or $0$ depending on whether $h\cap f$ is covered by a line of $[R_1]\cap L$ or not, respectively. Therefore,
\[C\leq r_1(q-1)+\sum_{j=2}^q{\left(r_j(q-1-r_1)+d(R_j)\right)}=(q-1)\sum_{j=1}^q{r_j}-r_1\sum_{j=2}^{q}r_j+\sum_{j=2}^{q}{d(R_j)} \leq\]
\[|L|(q-1)-r_1(|L|-r_1)+m.\]
The second inequality follows immediately from $m\leq |L|-r_1$.
\end{proof}

\begin{proposition}\label{r_1}
There exists a point $R\in e\setminus{P}$ such that $|([R]\setminus{e})\cap\El_S|\geq q-1$. Moreover, if $l=2$, then $R\notin\Pe_S$.
\end{proposition}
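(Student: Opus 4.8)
The plan is to recast the conclusion as a concentration statement and force it by playing the resolving axioms (read as covering constraints) against Lemma \ref{FPL}. Set $L=\El_S\setminus[P]$, the inner lines avoiding $P$. Since the only line through $R_i$ and $P$ is $e$, a line of $\El_S$ through $R_i$ other than $e$ is precisely a line of $L$ through $R_i$; hence $r_i:=|L\cap[R_i]|=|([R_i]\setminus\{e\})\cap\El_S|$, and the target is $\max_i r_i\ge q-1$. Two bookkeeping facts will recur: each line of $L$ meets $e$ in a unique point $\neq P$, so $\sum_{i=1}^{q} r_i=|L|$; and, from the already-established bounds $|[P]\cap\El_S|\ge q-3$, $|\Pe_S|\ge 2q-3$ and $|\El_S|\le 2q-1$, one gets $q-4\le|L|\le q+2$.

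The first real step is a lower bound on the quantity $C$ of Lemma \ref{FPL} (points off $e\cup f$ covered by $L$). By P1' at most one outer point is uncovered, and at most one point of $\Pe_S$ lies off $e\cup f$, so at least $q^2-q-2$ points off $e\cup f$ are covered by $\El_S$. A point covered by $\El_S$ but not by $L$ must lie on an inner line of $[P]$ and be $1$-covered there, so by P2' each inner line of $[P]$ other than $e,f$ carries at most one such outer point. Writing $p'=|([P]\cap\El_S)\setminus\{e,f\}|$, this yields $C\ge q^2-q-3-p'$. Feeding this into Lemma \ref{FPL} in the form $C\le|L|q-r_i(|L|-r_i+1)$ (valid for every $i$) and substituting $|\El_S|\le 2q-1$, $|[P]\cap\El_S|\ge q-3$, the terms recording whether $e,f\in\El_S$ cancel and everything collapses to the clean inequality $r_i(|L|+1-r_i)\le 4q$ for every $i$. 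Thus each $r_i$ avoids the middle interval $(r_-,r_+)$ with $r_\pm=\tfrac12\big((|L|+1)\pm\sqrt{(|L|+1)^2-16q}\big)$, and since $\sum_i r_i=|L|<2r_+$, at most one index can exceed $r_+$: at most one $r_i$ is large, the rest being $O(1)$.

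What remains — and this is the main obstacle — is to show that a large $r_i$ genuinely occurs, that it sits on $e$ rather than on $f$, and that it reaches the exact value $q-1$ and not merely the $q-O(1)$ that the quadratic alone provides. The delicate regime is $|[P]\cap\El_S|$ near $q+1$, where the covering bound above degrades; there I would argue directly. P2' forces $L$ to cover all but one point of almost every inner line of $[P]$, which is impossible unless no two lines of $L$ meet off $e\cup f$; the elementary fact that a family of pairs $(\ell\cap e,\ell\cap f)$ in which any two share a coordinate must have a common coordinate then pins $L$ to a pencil through a single point of $e$ or of $f$. A pencil through a point of $f$, or one through a point of $e$ of size $\le q-2$, leaves two $1$-covered outer points on some inner line of $[P]$ and violates P2' (while a full pencil of the size needed to avoid this would force $|\El_S|\ge 2q$, contradicting $|\El_S|\le 2q-1$); hence $L$ is a pencil of $\ge q-1$ lines through a point $R\in e$, i.e.\ $r_R\ge q-1$. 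In the complementary, non-tight regime I would instead use the first form of Lemma \ref{FPL}, whose $m$-term penalises concentration on $f$ and so steers the large pencil onto $e$.

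Finally, for the moreover clause I would specialise to $l=2$, where the sizes are pinned down: $|\Pe_S|\ge 2q-3$ forces $|\El_S|\le 2q-1$, and the subcase $(k,l)=(0,2)$ even gives $|\Pe_S|=2q-2$ and $|\El_S|\le 2q-2$. Assuming $R\in\Pe_S$, every line through $R$ other than $e$ meets $f$ in one of its $\ge q-1$ inner points, hence is a $2$-secant already resolved by Lemma \ref{lemma}; the $\ge q-1$ such lines that the main claim forces into $\El_S$ are therefore ``wasted'', and counting them together with the $\ge q-3$ lines of $[P]\cap\El_S$ and the extra line(s) needed to cover the two outer points of $e$ overruns $|\El_S|$, a contradiction. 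Hence $R\notin\Pe_S$.
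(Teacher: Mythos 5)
Your setup (taking $L=\El_S\setminus[P]$, lower-bounding the number $C$ of covered points off $e\cup f$ via P1$'$/P2$'$, and feeding it into Lemma \ref{FPL} to get $r_i(|L|+1-r_i)\le 4q$) is correct and matches the computation the paper performs in its Case 1. But the quadratic dichotomy is where the easy part ends, and the three things you yourself flag as "what remains" are exactly where the proposal breaks down. First, existence and the exact threshold: the dichotomy only forbids the interval $\bigl(r_-,r_+\bigr)$ with $r_+=\tfrac12\bigl((|L|+1)+\sqrt{(|L|+1)^2-16q}\bigr)$, and since $|L|$ can be as small as $q-4$, this threshold can be far below $q-1$ (about $13$ for $q=23$); moreover all $r_i$ could be $\le r_-$ without contradicting $\sum_i r_i=|L|$. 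Your mechanism for closing this -- forcing $L$ to be a pencil -- only works when $|L|\le q-2$, where each inner line of $[P]$ admits zero ``excess''; for $|L|\in\{q-1,\dots,q+2\}$ nothing pins $L$ to a pencil, and the fallback sentence about the $m$-term ``steering the pencil onto $e$'' is not an argument. The paper instead obtains existence from a tangent count: for $l=2$ the lines joining the two points $R_1,R_2\in[e]\setminus\Pe_S$ to the $q-k$ inner points of $f$ are tangents to $\Pe_S$, and P2 of Proposition \ref{resdef3} (as in the proof of Proposition \ref{k+l}) forces all but about $q-k$ of these $2(q-k)$ lines into $L$, so $r_1+r_2\ge q-3$; combined with the dichotomy (which at $|L|\le q+2$ does reach $q-1$) this produces $r_1\ge q-1$ at a point \emph{not} in $\Pe_S$, settling the ``moreover'' clause simultaneously. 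The case $k=l=1$, which your sketch never separates out, genuinely requires a different argument (the paper's Subcases 2.1/2.2, exploiting the $m$-term of Lemma \ref{FPL} to force $r_i\le 1$ on both $e$ and $f$ and then a parity-of-coverage contradiction).

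Second, your argument for the ``moreover'' clause does not close numerically. Counting $\ge q-1$ lines through $R\in\Pe_S$, $\ge q-3$ lines of $[P]\cap\El_S$, and ``the extra line(s) needed to cover the two outer points of $e$'' gives only $2q-4+O(1)$, which does not exceed the available bound $|\El_S|\le 2q-1$ (or $2q-2$). The roughly $q$ further lines needed again come from the P2 tangent count through $R_1,R_2$ described above, not from the covering requirement P1$'$, which costs only one line. So while the skeleton is the right one, the load-bearing steps -- producing a point of index $\ge q-1$, placing it off $\Pe_S$, and the $k=l=1$ analysis -- are missing or would fail as described.
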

\begin{proof}
We use the notations of Lemma \ref{FPL} with $L=\El_S\setminus{[P]}$. Let $W=\Pe\setminus{[e]\cup[f]}$. Suppose to the contrary that $r_i<q-1$ for all $1\leq i\leq q$ $(\star)$ .

\textbf{Case 1: $l=2$, $k\in\{0,1\}$.} $|\El_S|\leq 2q-1$ and $|[P]\cap\El_S|\geq q-3$ implies $|L|\leq q+2$. Keeping in mind that there may be one (but no more) point in $\Pe_S\cap W$, Property A' for the lines of $[P]\setminus{\{e,f\}}$ implies that $L$ must cover at least $(q-2)(q-1)+(q-2)=q(q-2)$ points of $W$. Then by Lemma \ref{FPL}, $q(q-2)\leq |L|q-r_i(|L|-r_i+1)\leq (q+2)q-r_i(q+3-r_i)$ (as the right hand side of the first inequality is growing in $|L|$, since $r_i<q$), which is equivalent with $r_i(q+3-r_i)\leq4q$. As the left hand side takes it minimum on the interval $[5,q-2]$ in $r_i=5$ and $r_i=q-2$, substituting $r_i=5$ yields $q\leq 10$, which does not hold. Hence $r_i\leq 4$ or $r_i\geq q-1$, thus by our assumption $r_i\leq 4$ for all $1\leq i\leq q$. 
Recall that $l=2$. Let $R_1$ and $R_2$ be the corresponding two points on $e\setminus{\Pe_S}$. According to Property A and considering the same ideas as in the proof of Proposition \ref{k+l}, we see that at least $q-2$ lines of $[R_1]\cup[R_2]$ must be in $L$. Thus $q-2\leq r_1+r_2\leq 8$, a contradiction. Therefore, without loss of generality we may conclude that $r_1\geq q-1$.

\textbf{Case 2: $k=l=1$.} In this case $\Pe_S=([e]\setminus{P,R})\cup([f]\setminus{P,Q})$, and $|\Pe_S|=|\El_S|=2q-2$. Note that the role of the lines $e$ and $f$ may be interchanged as they have the same combinatorial properties, thus we may expand the assumption $(\star)$ to $f$ as well, and it is also suitable to find the point $R$ on $f$.
Now $W\cap\Pe_S=\emptyset$, thus Property A' yields that at least $(q-1)^2$ points of $W$ must be covered by $L$, moreover, Property A implies $|\left([P]\setminus{\{e,f\}}\right)\cap\El_S|\geq q-2$, whence $|L|\leq q$ follows.

\textit{Subcase 2.1: $[P]\setminus{(\El_S\cup\{e,f\})}=\{\ell\}\neq\emptyset$.} As there may be at most one skew line outside $\El_S$ (Property 1'), this implies that the skew line $RQ$ is in $\El_S$. Lemma \ref{FPL} yields $(q-1)^2\leq |L|q-r_i(|L|-r_i+1)\leq q^2-r_i(q+1-r_i)$, equivalently, $r_i(q+1-r_i)\leq2q-1$. As in Case 1, this shows that $3\leq r_i\leq q-2$ is not possible, hence by $(\star)$ $r_i\leq 2$ for all $1\leq i\leq q$. Interchanging $e$ and $f$, we see that on any point in $f\setminus{P}$ there are at most two lines from $L$, hence $m\leq q/2$.
Then again by Lemma \ref{FPL}, $(q-1)^2\leq |L|(q-1)-r_i(|L|-r_i)+m\leq |L|(q-1)-r_i(|L|-r_i)+q/2$, equivalently, $r_i(q-r_i)\leq 3q/2-1$, hence $r_i\leq 1$ follows ($1\leq i\leq q$). Again, this holds for $f$ as well; that is, every point on $(e\cup f)\setminus{\{P\}}$ is covered at most once by $L$. The line $RQ$ is in $\El_S$, but then the points $R$ and $Q$ violate Property $2'$.

\textit{Subcase 2.2: $[P]\setminus{\{e,f\}}\subset \El_S$.} Then $|[P]\cap\El_S|\geq q-1$, thus $|L|\leq q-1$. Let $i\in\{1,\ldots,q\}$. Recall that $m\leq |L|-r_i$. Combined with Lemma \ref{FPL} we get $(q-1)^2\leq (q-1)^2-r_i(q-1-r_i)+m$, therefore $r_i(q-1-r_i)\leq m\leq q-1-r_i$, hence $r_i\leq 1$. As this is valid for the points of $f$ as well, $m=0$ follows. But then (under the assumption $(\star)$) $r_i=0$ would hold for all $1\leq i\leq q$, which is impossible.
\end{proof}

We have seen that $|\left([P]\setminus{\{e,f\}}\right)\cap\El_S|\geq q-3$. Now we prove that equality can not hold.

\begin{proposition}\label{p}
$|\left([P]\setminus{\{e,f\}}\right)\cap\El_S|\geq q-2$.
\end{proposition}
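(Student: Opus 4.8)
The plan is to leverage what has already been established: we know $N:=|([P]\setminus\{e,f\})\cap\El_S|\ge q-3$, and that the $q-1$ lines through $P$ other than $e$ and $f$ each meet $\Pe_S$ only inside $\{P\}\cup Z$. Thus their behaviour is controlled by $|Z|$ and by whether $P\in\Pe_S$, and I would split on this. By the bound $|Z|\le k+l-2$ from the proof of Proposition \ref{k+l} (refined to take $P$ into account), the only way to have $|Z|=1$ is $(k,l)=(1,2)$ with $P\notin\Pe_S$; in every other configuration $|Z|=0$. When $|Z|=0$, each line through $P$ distinct from $e,f$ is skew to $\Pe_S$ (if $P\notin\Pe_S$) or tangent to $\Pe_S$ at $P$ (if $P\in\Pe_S$). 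Property P1, respectively Property P2 applied at the inner point $P$, then permits at most one of these $q-1$ lines to be outer, so $N\ge q-2$ immediately.

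The only remaining configuration is $(k,l)=(1,2)$ with $P\notin\Pe_S$ and $|Z|=1$; here $|\Pe_S|=|\El_S|=2q-2$ and the general estimate gives only $N\ge q-3$. I would argue by contradiction, assuming $N=q-3$. Then exactly two lines through $P$ other than $e,f$ are outer: one is the tangent $PZ$ and, by Property P1, the other is a single skew line $g$, and moreover $PZ\notin\El_S$. Now I bring in Proposition \ref{r_1}: since $l=2$ there is a point $R\in [e]\setminus\{P\}$ with $R\notin\Pe_S$ and $r:=|([R]\setminus\{e\})\cap\El_S|\ge q-1$. Writing $L=\El_S\setminus[P]$, we have $|L|=|\El_S|-|[P]\cap\El_S|\le (2q-2)-(q-3)=q+1$, so at most $s:=|L|-r\le 2$ lines of $L$ miss $R$; I will call these the special lines. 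The engine of the contradiction is the tension between the large pencil of $L$-lines concentrated at $R$ and the two outer lines $g,PZ$.

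The key step is to examine the $q$ points of $W:=\Pe\setminus([e]\cup[f])$ lying on $g$. All are outer and none is covered by any inner line through $P$ (as $g\notin\El_S$), so each must be covered at least twice by $L$ to be resolved. But a point of $g$ lies on only one line of the pencil through $R$, hence can be doubly covered only via a special line; since the $\le 2$ special lines meet $g$ in $\le 2$ points, at least $q-3$ points $X\in g$ are covered exactly once, their unique inner line being $XR\in L$. Property P2' applied to $XR$ then forces the second point $X'=XR\cap PZ$ — which lies on the outer line $PZ$ and is therefore also uncovered by $P$-lines — to be doubly covered, i.e.\ to lie on a special line (save for the boundedly many $X'$ that coincide with $Z$ or are the single point allowed uncovered by Property P1'). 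As $X$ ranges over these $\ge q-3$ points of $g$, the perspectivity with centre $R$ sends them to $\ge q-3$ distinct points $X'$ of $PZ$; yet the special lines meet $PZ$ in at most $s\le 2$ points. For $q\ge 23$ this is impossible, and the contradiction yields $N\ge q-2$.

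I expect the main obstacle to be precisely this last case: extracting a genuinely \emph{local} contradiction. A global incidence count between $W$ and $\El_S$ is too lossy, because the exceptions permitted by Properties P1' and P2' almost exactly absorb the deficit between the available incidences and those needed for double covering. The decisive observation is that forcing nearly all of $L$ into one pencil through $R$ makes $L$ cover $W$ only ``thinly'', so that resolving the points of the outer line $g$ exhausts the scarce Property P2' budget along the pencil lines $XR$, which in turn pushes far too many points of the tangent line $PZ$ onto the at most two special lines.
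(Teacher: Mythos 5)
Your proof is correct and follows essentially the same route as the paper's: the identical reduction to $(k,l)=(1,2)$, $P\notin\Pe_S$, $|Z|=1$ (so that the two outer lines through $P$ are the tangent $PZ$ and a single skew line $g$), the same appeal to Proposition~\ref{r_1} to confine $\El_S$ to $[P]\cup[R]$ up to at most two further lines, and the same final contradiction with Property A'. The only difference is presentational: the paper exhibits one line $r\in[R]\setminus\{e\}$ avoiding $Z$ and the few exceptional intersection points, so that $r\cap g$ and $r\cap PZ$ are two unresolved outer points on $r$, whereas you aggregate this same violation over the whole pencil through $R$ and conclude by pigeonhole on the at most two special lines meeting $PZ$.
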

\begin{proof}
Suppose to the contrary that there exist two distinct lines, $g$ and $h$, such that $\{g,h\}\subset [P]\setminus{\{e,f\}}$, $\{g,h\}\cap\El_S=\emptyset$. Property A yields that (at least) one of them is blocked by a point $Z\in(\Pe_S\setminus{\{[e]\cup[f]\}})$. Thus $k=1$, $l=2$, $P\notin\Pe_S$ and $|\Pe_S\setminus{\{[e]\cup[f]\}}|=1$. Let $R$ be the point on $e\setminus{\{P\}}$ found in Proposition \ref{r_1}. Then $|\El_S\setminus{([P]\cup[R])}|\leq 1$, let $\ell$ denote this (possibly not existing) line. 
Take a line $r$ of $[R]\setminus{\{e\}}$ that does not go through any of the points $g\cap\ell$, $h\cap\ell$, and $Z$. Such a line exists as $q-3>0$. The points $r\cap g$ and $r\cap h$ show that $r$ violates Property A', a contradiction.
\end{proof}

\begin{proposition}\label{}
If $|\Pe_S|=2q-3$, then $|\El_S|\geq 2q-1$.
\end{proposition}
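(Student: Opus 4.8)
The plan is to use the cardinality hypothesis to pin the configuration down completely, and then to show that trying to keep $|\El_S|$ as low as $2q-2$ is blocked by the tangent condition P2 of Proposition \ref{resdef3}.

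First I would read off $k$, $l$, $Z$ and the status of $P$. Since the $q-l$ points of $[e]\cap\Pe_S$, the $q-k$ points of $[f]\cap\Pe_S$ (all distinct from $P$), the set $Z$, and possibly $P$ itself exhaust $\Pe_S$, we have $|\Pe_S|=(q-l)+(q-k)+|Z|+[P\in\Pe_S]$. Setting this equal to $2q-3$ gives $|Z|+[P\in\Pe_S]=k+l-3$, so $k+l\geq3$; together with Proposition \ref{k+l} ($k+l\leq3$ and $l\neq3$) this forces $k=1$, $l=2$, $Z=\emptyset$ and $P\notin\Pe_S$. Hence $\Pe_S$ lies entirely on $e\cup f$, with $[e]\setminus\Pe_S=\{P,R_1,R_2\}$ and $[f]\setminus\Pe_S=\{P,Q_1\}$ for suitable points $R_1,R_2,Q_1$. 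By Proposition \ref{r_1} (using $l=2$) I may take the point $R$ found there to be $R_1\notin\Pe_S$, so that $A:=([R_1]\setminus\{e\})\cap\El_S$ has $|A|\geq q-1$; write also $B:=([P]\setminus\{e,f\})\cap\El_S$.

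Next I would inventory the lines skew to $\Pe_S$. Because $Z=\emptyset$, a line avoids $\Pe_S$ exactly when it is one of the $q-1$ lines of $[P]\setminus\{e,f\}$ (these meet $e\cup f$ only in $P\notin\Pe_S$) or one of $R_1Q_1$, $R_2Q_1$; thus there are precisely $q+1$ skew lines, and every member of $B$ is skew. By P1 at most one skew line is outer, so at least $q$ of them lie in $\El_S$, i.e. $|B|+[R_1Q_1\in\El_S]+[R_2Q_1\in\El_S]\geq q$. The families $A$, $B$ and $\{R_2Q_1\}\cap\El_S$ are pairwise disjoint, since their lines pass through $R_1$, $P$ and $R_2$ respectively and the only line joining two of these three points is $e$, which lies in none of the families; hence $|\El_S|\geq|A|+|B|+[R_2Q_1\in\El_S]$. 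As $[R_1Q_1\in\El_S]\leq1$, the skew count gives $|B|+[R_2Q_1\in\El_S]\geq q-1$, and combined with $|A|\geq q-1$ this yields the uniform bound $|\El_S|\geq(q-1)+(q-1)=2q-2$.

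The crux, and the step I expect to be the main obstacle, is excluding equality, since the skew-line counting stalls exactly at $2q-2$. Assuming $|\El_S|=2q-2$ forces every inequality above to be tight: $|A|=q-1$, $R_1Q_1\in\El_S$, and $\El_S$ equals $A\cup B\cup(\{R_2Q_1\}\cap\El_S)$ with nothing else. In particular no line of $\El_S$ passes through $R_2$ other than possibly $R_2Q_1$, while $A$ omits exactly one of the lines $R_1Q_1,\ldots,R_1Q_q$, say $R_1Q_{j_0}$; since $R_1Q_1\in A$ we have $j_0\neq1$, so $Q_{j_0}\in\Pe_S$ is an inner point of $f$. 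But then both $R_1Q_{j_0}$ and $R_2Q_{j_0}$ are tangent to $\Pe_S$ (each meets $\Pe_S$ only in $Q_{j_0}$, because $R_1,R_2\notin\Pe_S$ and $Z=\emptyset$), both are outer, and both pass through the inner point $Q_{j_0}$, contradicting P2. Therefore $|\El_S|\geq2q-1$. The delicate point is precisely that this last line cannot be extracted from covering or skew-line considerations on the $\El_S$-side alone; one must return to the tangent condition P2 governing how $\Pe_S$ resolves lines and apply it at an inner point of $f$.
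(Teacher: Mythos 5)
Your proof is correct. After the shared first step --- using $|\Pe_S|=2q-3$ together with Proposition \ref{k+l} to force $k=1$, $l=2$, $Z=\emptyset$ and $P\notin\Pe_S$ --- your count diverges from the paper's. The paper assembles three pairwise disjoint bunches of lines of $\El_S$: at least $q-2$ lines of $[P]\setminus\{e,f\}$ (Proposition \ref{p}); for each of the $q-1$ inner points $F$ on $f$, at least one of the two tangents $FR$, $FR'$, forced in by P2; and at least two of the three skew lines $\ell_0$, $RQ$, $R'Q$, forced in by P1 --- which reaches $2q-1$ in a single pass. You instead pair the $q-1$ lines through $R=R_1$ guaranteed by Proposition \ref{r_1} with at least $q-1$ skew lines avoiding $R_1$ (from P1), which stalls at $2q-2$, and then rule out equality by a tightness analysis that produces an inner point $Q_{j_0}$ of $f$ lying on two outer tangents $R_1Q_{j_0}$ and $R_2Q_{j_0}$, contradicting P2. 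In effect the paper spends P2 at all $q-1$ inner points of $f$ up front, while you spend P1 more heavily in the main count, bypass Proposition \ref{p} altogether, and invoke P2 only once in the endgame; the price is the extremal-case analysis, the reward is an explicit description of the (impossible) configuration with $|\El_S|=2q-2$. Both arguments are elementary and rest on the same criteria from Proposition \ref{resdef3}.
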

\begin{proof}
$|\Pe_S|=2q-3$ means that $k+l=3$. Let $R$ be the point on $e\setminus{\{P\}}$ found in Proposition \ref{r_1}, and denote by $R'$ the point $e\setminus \{\Pe_S\} \cup \{P,R\}$. We count the lines in $S$:
\begin{itemize}
\item By Proposition \ref{p} $|\left([P]\setminus{\{e,f\}}\right)\cap\El_S|\geq q-2$;
\item By Property 2 in Proposition \ref{resdef3}, through any point $F \in f\setminus{\{P,Q\}}$ at least one of the lines $FR, FR'$ has to be in $\El_S$ as both are tangents to $\Pe_S$ (at least $q-1$ lines);
\item By Property 1 in Proposition \ref{resdef3}, at least two of the three skew lines ($\ell_0, RQ, R'Q$) has to be in $\El_S$.
\end{itemize}
Altogether there are at least $2q-1$ lines in $\El_S$.
\end{proof}

\newcommand{\cO}{\mathcal{O}} \newcommand{\cOd}{\mathcal{O}^D}%

Thus, due to the assumption $|\Pe_S|\leq|\El_S|$, either $|\Pe_S|=2q-3$ and $|\El_S|\geq 2q-1$ or $2q-2\leq|\Pe_S|\leq|\El_S|$. This completes the proof of Theorem \ref{4q-4}.

Some lower bound on $q$ is necessary in Theorem \ref{4q-4}. As we have seen, the theorem fails for $q=2$ (Remark \ref{Fano}), since $\mu(\PG(2,2))=5$.
By Proposition \ref{const} we have $\mu(\PG(2,q))\leq 4q-4$ for $q\geq 3$. We have checked that $\mu(\PG(2,q))=4q-4$ for $q=3$ as well; however, the upper bound is not always tight. For $q=4$, a computer search showed $\mu(\PG(2,4))=10$, and $\PG(2,5)\leq 15$. 
We show a nice construction of size ten in $\PG(2,4)$. For basic facts about hyperovals see \cite{Hirschfeld}.
A hyperoval $\cO$ in $\PG(2,4)$ has six points, no tangents, $6\cdot5/2=15$ secants and six skew lines. Through any point $P\notin\cO$ there pass at most two skew lines, otherwise counting the points of $\cO$ on the lines through $P$ we obtained $|\cO|\leq 4$. Thus the set $\cOd$ of skew lines form a dual hyperoval. 
Now let $P\in\cO$ and $\ell\in\cOd$ be arbitrary, and let $\Pe_S=\cO\setminus\{P\}$, $\El_S=\cOd\setminus\{\ell\}$, $S=\Pe_S\cup\El_S$. Clearly, $\ell$ is the only skew line to $\Pe_S$, and there is precisely one tangent line on every point $R\in\Pe_S$ (namely $PR$). Thus P1 and P2 hold. Dually, P1' and P2' also hold, thus $S$ is a resolving set of size ten.

We remark that projective planes show an interesting example of highly symmetric graphs with large dimension jump. We follow the notations of \cite{BC}.
A vertex-set $B$ in a graph $\Gamma$ is called a \emph{base}, if the only automorphism of $\Gamma$ that fixes $B$ pointwise is the identity. The size of the smallest base of $\Gamma$ is called the \emph{base size of $\Gamma$}, and it is denoted by $b(\Gamma)$. 
As a resolving set of $\Gamma$ is a base, $b(\Gamma)\leq\mu(\Gamma)$ always holds. A repeatedly investigated question asks how large the gap $\delta(\Gamma)=\mu(\Gamma)-b(\Gamma)$ may be between these two parameters, referred to as the \emph{dimension jump of $\Gamma$} (see \cite{BC} and the references therein). 
Let $\Gamma$ be the incidence graph of $\PG(2,q)$. Then $\Gamma$ has order $n=2(q^2+q+1)$. It is well known that (the automorphism group of) $\Gamma$ is distance-transitive (that is, any pair $(u,v)$ of vertices can be transferred into any other pair $(u',v')$ of vertices by an automorphism of $\Gamma$ unless $d(u,v)\neq d(u',v')$.)
It is easy to see that $b(\Gamma)\leq5$ (four points are enough to fix the linear part of the collineation, and one more point forces the field automorphism to be the identity). Thus $\delta(\Gamma)\geq 4q-9$, which is quite large in terms of the order of $\Gamma$, roughly $2\sqrt{2n}$.

%%%%%%%%%%%%%%%%%%%%%%%%%%%%%%%%%%%%%%%%%%%%%%%%%%%%%%%
\section{Constructions}\label{SectCons}

Now we describe all resolving sets of size $4q-4$. Observing the nice symmetry and self-duality of the shown construction in Proposition \ref{const}, one might think that it is the only construction. However, this could not be further from the truth. In our somehow arbitrarily chosen system, there are 32 different constructions. Recall that we assume $|\Pe_S| \leq |\El_S|$.

By Propositions \ref{k+l}, \ref{r_1} and \ref{p}, we know that any resolving set $S=\Pe_S \cup \El_S$ of size $4q-4$ must contain the following structure $S^*=\Pe^*_S\cup\El^*_S$ of size $4q-6$ (see Figure \ref{S*}): two lines, $e$, $f$, where $[e] \cap [f] = \{P\}$, such that $|\Pe^*_S \cap([e] \setminus \{P\})|=q-2$, $|\Pe^*_S \cap([f] \setminus \{P\})|=q-1$, $|\El^*_S \cap([P] \setminus \{e,f\})|=q-2$, and for one of the points in $[e] \setminus (\Pe^*_S \cup \{P\})$, denote it by $R$, $|\El^*_S\cap([R]\setminus\{e\})|= q-1$. We denote by $R'$ the other point in $[e] \setminus (\Pe^*_S \cup \{P,R\})$, and let $\{Q\}= [f] \setminus (\Pe^*_S \cup \{P\})$, $\{\ell_0\}= [P] \setminus (\El^*_S \cup \{e, f\})$, $\{\ell_1\}= [R] \setminus (\El^*_S \cup \{e\})$. If $Q \notin \ell_1$, then let $T= f \cap \ell_1$.

\begin{figure}[!h]
\begin{center}
\begingroup\makeatletter\ifx\SetFigFontNFSS\undefined%
\gdef\SetFigFontNFSS#1#2#3#4#5{%
  \reset@font\fontsize{#1}{#2pt}%
  \fontfamily{#3}\fontseries{#4}\fontshape{#5}%
  \selectfont}%
\fi\endgroup%

\renewcommand{\mb}{\makebox(0,0)}
\renewcommand{\cd}{3}
\renewcommand{\dll}{2}

\setlength{\unitlength}{1mm}
\renewcommand{\dashlinestretch}{40}

\SetFigFontNFSS{12}{12}{\rmdefault}{\mddefault}{\itdefault}
\thinlines

\begin{picture}(50,50)(-10,-40)

% S^*; (0,0)

\put(-2,3){\mb{$P$}}			% (-2,3)
\put(-5,-27){\mb{$R'$}}			% (-5,-27)
\put(-5,-36){\mb{$R$}}			% (-5,-36)
\put(40,2){\mb{$Q$}}			% (40,2)
\put(15,3){\mb{$f$}}			% (15,3)
\put(-3,-15){\mb{$e$}}			% (-3,15)

\dashline{\dll}(0,0)	(36,0)		% (0,0)   (36,0)	f
\dashline{\dll}(0,0)	(0,-36)		% (0,0)   (0,-36)	e
\dashline{\dll}(0,0)	(9,-9)		% (0,0)	  (9,-9)	\ell_0
\dashline{\dll}(0,-36)	(10,-20)	% (0,-36) (10,-20)  \ell_1

\put(13,-9){\mb{$\ell_0$}}		% (13,-9)
\put(12,-18){\mb{$\ell_1$}}		% (13,-9)

\drawline(0,0)(6,-12) 		% (0,0) (6,-12)		P-bol
\drawline(0,0)(12,-6) 		% (0,0) (12,-6)		P-bol

\drawline(0,-36)(5,-18)		% (0,-36) (5,-18)	R-bol
\drawline(0,-36)(14,-24)	% (0,-36) (14,-24)	R-bol
\drawline(0,-36)(17,-28)	% (0,-36) (17,-28)	R-bol

\filltype{white}
\put(0,0){\circle*{\cd}} 	% (0,0)		P
\put(36,0){\circle*{\cd}}	% (36,0)	Q
\put(0,-27){\circle*{\cd}}	% (0,-27)	R'
\put(0,-36){\circle*{\cd}}	% (0,-36)	R

\filltype{black}
\put(9,0){\circle*{\cd}}	% (9,0)		e-n
\put(18,0){\circle*{\cd}}	% (18,0)	e-n
\put(27,0){\circle*{\cd}}	% (27,0)	e-n
\put(0,-9){\circle*{\cd}}	% (0,-9)	f-en
\put(0,-18){\circle*{\cd}}	% (0,-18)	f-en
\end{picture}
\end{center}
\caption{\label{S*}The structure $S^*$ of size $4q-6$ that is contained in any resolving set of size $4q-4$.}
\end{figure}
We have to complete this structure $S^*$ by adding two more objects to get a resolving set $S$. Assuming $|\Pe_S| \leq |\El_S|$, we have to add two lines or one line and one point, and then check the criteria of Proposition \ref{resdef3}. 

The problems of $S^*$ compared to the properties in Proposition \ref{resdef3} are the following:
\begin{itemize}
\item{P1 (outer skew lines to $\Pe^*_S$): $\ell_0$, $R'Q$, and if $Q\in \ell_1$, then $\ell_1$.}
\item{P1' (outer points not covered by $\El^*_S$):  $R'$, $\ell_0 \cap \ell_1$ and if $Q\in \ell_1$, then $Q$.}
\item{P2 (outer tangent lines through an inner point): if $Q \notin \ell_1$, then through $T=f \cap \ell_1$ the lines $\ell_1=RT$ and $R'T$ are tangents. Furthermore, if we add the intersection point of two outer skew lines (listed at P1), those will be two outer tangents through it.}
\item{P2' (outer $1$-covered points on an inner line): if $Q \notin \ell_1$, then on $RQ$ the points $Q$ and $\ell_0 \cap RQ$ are $1$-covered. Furthermore, if we add the line connecting two outer uncovered points (listed at P1'), those will be two outer one-covered points on it.}
\end{itemize}

These problems must be resolved after adding the two objects. By the letter ``C'' and a number we refer to the respective part of Figure \ref{consfig} at the end of the article. We distinguish the cases whether we add $\ell_1$ into $S$ or not, and whether $Q \in \ell_1$ or not.

{\bf I.} $\ell_1\in\El_S$ (see Figure \ref{consfig} (a)).

Now the problems of this construction compared to the properties are the following:

\begin{itemize}
\item{P1: $\ell_0$, $R'Q$.}
\item{P1': solved automatically, as the only outer not covered point is $R'$.}
\item{P2: if we add $\ell_0 \cap R'Q$, then $\ell_0$ and $R'Q$ are tangents through it.}
\item{P2': on $RQ$ the points $Q$ and $\ell_0 \cap RQ$ are $1$-covered.}
\end{itemize}

Note that in this case it does not matter whether $Q$ was on $\ell_1$ or not (see constructions C1-C4 on Figure \ref{consfig}). We can add one more line or one point to $S$ to solve these problems.

{\bf 1.a)} Adding $\ell_1$ and one more line:
in this case P2 is also solved automatically. To solve P1, we have to add one of the skew lines $\ell_0$ and $R'Q$. This automatically solves P2' as one of these lines covers one of the $1$-covered points on $RQ$ ($Q$, $\ell_0 \cap RQ$).
So we get two constructions: we can add $\ell_0$ and $\ell_1$ (\cnum), or $R'Q$ and $\ell_1$ (\cnum).

{\bf 1.b)} Adding $\ell_1$ and a point: 
because of P2, we cannot add the point $\ell_0 \cap R'Q$, so P2 is solved. To solve P1, we have to add a point on $\ell_0$ or $R'Q$. To solve P2', we have to add $Q$ (\cnum) or $\ell_0 \cap RQ$ (\cnum). Both choices will solve P1.

From now on we do not add $\ell_1$ to $\El_S$. We distinguish the cases whether $Q \in \ell_1$ or not.

\textbf{II.} $\ell_1\notin\El_S$, $Q\in\ell_1$ (see Figure \ref{consfig} (b)).

Now the problems are the following:

\begin{itemize}
\item{P1: $\ell_0$, $R'Q$ and $\ell_1$.}
\item{P1': $R'$, $\ell_0 \cap \ell_1$ and $Q$.}
\item{P2: we have to be careful if we add the intersection of two skew lines (listed at P1).}
\item{P2': we have to take care if we add the line joining two of the uncovered points (listed at P1').}
\end{itemize}

{\bf 2.a)} Adding two lines:
to solve P1, we have to add the skew lines $\ell_0$ and $R'Q$. But then we cannot solve P2', because $Q$ and $R'$ are two $1$-covered points on $R'Q$. So there are no such constructions.

{\bf 2.b)} Adding a point and a line:
to solve P1', we have to add or cover at least two of the points $Q$, $R'$ and $\ell_0 \cap \ell_1$. We cannot do this only by covering two points with a line, because then we cannot solve P2'. So we have to add one of these points.
\begin{enumerate}
\item Adding the point $Q$: P1 is solved automatically, as the only outer skew line is $\ell_0$. To solve P2, we have to add $R'Q$, since $R'Q$ and $\ell_1=RQ$ are outer tangent lines through $Q$. This solves P1' by covering $R'$. P2' is solved, as the only outer point on $R'Q$ is $R'$.
\item Adding the point $R'$: to solve P1, we have to add $\ell_0$. This solves P1', as $\ell_0$ covers $\ell_0 \cap \ell_1$. P2 and P2' are solved automatically. (This construction was the original example in the proof of Proposition \ref{const}.)
\item Adding the point $\ell_0 \cap \ell_1$: to solve P1', we have to cover $R'$ or $Q$. To solve P2, we have to add $\ell_0$, as $\ell_0$ and $\ell_1$ are outer tangent lines through the intersection point. But $\ell_0$ does not cover either $R'$ or $Q$, so there is no such a construction.
\end{enumerate}
So we get two constructions: we can add $Q$ and $R'Q$ (\cnum) or $R'$ and $\ell_0$ (\cnum). \newcounter{origconst}\setcounter{origconst}{\value{connum}}

{\bf Remark.} We already have two constructions such that $S$ contains $Q$. In fact if we add $Q$ to $\Pe_S$ these are the only possibilities. To solve P2, we have to add $\ell_1$ or $R'T$, and these are the constructions in 1.b) and 2.b), respectively. So from now on we do not add $Q$ to $\Pe_S$, and suppose that $Q \notin \ell_1$.

\textbf{III.} $\ell_1\notin\El_S$, $Q\notin\ell_1$,  $Q\notin\Pe_S$ (see Figure \ref{consfig} (c)).

The problems of this construction compared to the properties are the following:

\begin{itemize}\label{problemsIII}
\item{P1: $\ell_0$ and $R'Q$.}
\item{P1': $R'$ and $\ell_0 \cap \ell_1$.}
\item{P2: (i) through $T=f \cap \ell_1$ the lines $\ell_1=RT$ and $R'T$ are tangents; (ii) furthermore, we have to be careful if we add $\ell_0\cap R'Q$.}
\item{P2': (i) on $RQ$ the points $Q$ and $\ell_0 \cap RQ$ are $1$-covered; (ii) furthermore, we have to take care if we add the line joining $R'$ and $\ell_0\cap\ell_1$.}
\end{itemize}

{\bf 3.} Adding two lines ($\ell_1 \notin \El_S$, $Q \notin \Pe_S$, $Q \notin \ell_1$):
to solve P1, we have to add $\ell_0$ or $R'Q$. This automatically solves P1' by covering $\ell_0 \cap \ell_1$ or $R'$; and also solves P2'(i) by covering $\ell_0 \cap RQ$ or $Q$. To solve P2, we have to add $R'T$. P2'(ii) could be a problem if we add $R'Q$ and $(\ell_0 \cap \ell_1) \in R'Q$, but adding $R'T$ solves it as well by covering $R'$.
So we get two new constructions: we can add $\ell_0$ and $R'T$ (\cnum) or $R'Q$ and $R'T$ (\cnum).

From now on we have to add a point and a line to $S$.

{\bf Notation.} Let $U$ be an arbitrary point in $\{[e] \setminus \{P,R,R'\} \}$, and $V$ in $\{[f] \setminus \{P,Q,T\} \}$. Note that $U, V \in \Pe^*_S$.

Since most of the problems are caused by $R'$, we distinguish the cases whether we add $R'$ to $\Pe_S$ or not.

{\bf 4.} Adding $R'$ to $\Pe_S$ ($\ell_1 \notin \El_S$, $Q \notin \Pe_S$, $Q \notin \ell_1$):
P1 is solved as the only outer skew line to $\Pe_S$ is $\ell_0$. P1' is solved as the only outer point not covered by $\El_S$ is $\ell_0 \cap \ell_1$. P2 is solved as through $T$ the only outer tangent line is $\ell_1$. The new line cannot cause problem compared to P2'(ii) as $R'$ is an inner point now. The only problem we have to solve is P2'(i): on $RQ$ we have to cover $Q$ or $\ell_0 \cap RQ$ with a line.
\begin{enumerate}
\item $Q$: We can cover it by $f$ (\cnum), or $UQ$ or $R'Q$ (\cnum), both choices solve P2'.
\item $\ell_0 \cap RQ$: We can cover it by $\ell_0$ (\cnum) or the line connecting $U$ or $R'$ and $\ell_0 \cap RQ$ (\cnum), both choices solve P2'.
\end{enumerate}

From now on we suppose that $R'\notin \Pe_S$.

\textbf{IV.} $\ell_1\notin\El_S$, $Q\notin\ell_1$,  $Q\notin\Pe_S$, $R'\notin \Pe_S$ (see Figure \ref{consfig} (b)), we add one point and one line.

As we have to add a point and a line to $S$, we will go through systematically the possible addable lines keeping in mind the assumptions. First we check the line $e$, and then the lines which go through the points of $[e]$. We have to distinguish the points $P$, $U \in [e] \cap \Pe^*_S$ and $R'$ (as we have already seen the case adding $\ell_1$, the only outer line through $R$). We continue to refer to the problems listed in the case {\bf III.}
Note that P2'(ii) causes problem only in the last case (when adding a line through $R'$).

{\bf 5.} Adding $e$ to $\El_S$:
P1' is solved as $e$ covers $R'$. To solve P2'(i), we have to add $\ell_0 \cap RQ$ (as we do not add $Q$); this also solves P1. $\ell_0 \cap \ell_1 \notin RQ$ so this solves P2 only if $\ell_0 \cap RQ \in R'T$ (\cnum).

{\bf 6.} Adding a line through $P$:

{\bf 6. a)} Adding $f$:
P2'(i) is solved as $f$ covers $Q$. To solve P1', we have to add $\ell_0 \cap \ell_1$ (as we do not add $R'$); this also solves P1 and P2(i). By P2(ii), this works only if $\ell_0 \cap \ell_1 \notin R'Q$ (\cnum). 

{\bf 6. b)} Adding $\ell_0$:
P1 and P1' are solved as the only outer skew line and outer not covered point are $R'Q$ and $R'$. P2'(i) is solved as $\ell_0$ covers $\ell_0 \cap RQ$. To solve P2, we have to add an arbitrary point $Z$ on one of the lines $\ell_1$ (\cnum) and $R'T$ (\cnum). Note that in the former case we may add the point $R$ as well.

{\bf 7.} Adding a line through $U$:
we have to distinguish whether the added line meets $f$ in a point $V \in \{[f] \setminus \{P,Q\} \}$ or in $Q$. (Here we do not have to take care of $T$, the problems are the same for $UV$ and $UT$.)

{\bf 7. a)} Adding $UV$:
as $UV$ does not cover $R'$ and $Q$, and we do not add these points to $S$, to solve P1' and P2', one of the points $\ell_0 \cap \ell_1$ and $\ell_0 \cap RQ$ has to be covered by $UV$, and the other one has to be added to $S$. If $UV$ contains $\ell_0 \cap \ell_1$ and we add $\ell_0 \cap RQ$, then P1' and P2' are solved, as well as P1. This solves P2 only if $\ell_0 \cap RQ \in R'T$ (\cnum).
If $UV$ contains $\ell_0 \cap RQ$ and we add $\ell_0 \cap \ell_1$, then P1', P2', P1 and P2(i) are solved. By P2(ii) this works only if $\ell_0 \cap \ell_1 \notin R'Q$ (\cnum).

{\bf 7. b)} Adding $UQ$:
P2' is solved. If $\ell_0\cap\ell_1\notin UQ$, then we have to add $\ell_0\cap\ell_1$ to solve P1', which also solves P1 and P2(i). By P2(ii), this works only if $\ell_0 \cap \ell_1 \notin R'Q$ (\cnum).
If $\ell_0\cap\ell_1\in UQ$, then P1' is solved. To solve P1, we have to add a point on $\ell_0$ or on $R'Q$; to solve P2, we have to add point on $\ell_1$ or on $R'T$. By P2(ii), we cannot add $\ell_0 \cap R'Q$. Thus we may add $\ell_0\cap\ell_1$ (\cnum), $\ell_0 \cap R'T$ (\cnum) or $\ell_1 \cap R'Q$ (\cnum). Each choice will solve P1 and P2.

Now we check the cases when we add a line through $R'$. This solves P1' as $R'$ will be covered. Because of P2'(ii), we have to distinguish whether the added line contains $\ell_0 \cap \ell_1$ or not. 

{\bf 8.} Adding the line $g$ connecting $R'$ and $\ell_0 \cap \ell_1$:
to solve P2'(ii), we have to add $\ell_0 \cap \ell_1$. As $g$ cannot contain $\ell_0 \cap RQ$, it has to contain $Q$ to solve P2'(i). This also solves P1 and P2. So we get one construction: if $\ell_0 \cap \ell_1 \in R'Q$, we add $R'Q$ and $\ell_0 \cap \ell_1$ (\cnum).

{\bf 9.} Adding a line through $R'$ not containing $\ell_0 \cap \ell_1$:

We have to distinguish whether the added line meets $f$ in a point $V \in \{[f] \setminus \{P,T,Q\} \}$, in $T$ or in $Q$.

{\bf 9. a)} Adding $R'V$:
if $\ell_0 \cap RQ$ is not covered by $R'V$, we have to add it to $S$ in order to solve P2'(i). 
This solves P1, but solves P2 only if $\ell_0 \cap RQ \in R'T$ (\cnum). 
If $\ell_0 \cap RQ \in R'V$, then it solves P2'(i). To solve P1, we have to add a point on $\ell_0$ or $R'Q$; to solve P2, we have to add a point on $\ell_1$ or $R'T$, but we cannot add $\ell_0 \cap R'Q$ because of P2(ii). Adding $\ell_0 \cap R'T$ solves P1 and P2 without any further conditions (\cnum). Adding $\ell_0 \cap \ell_1$ (\cnum) or $\ell_1 \cap R'Q$ (\cnum) solves P1, but by P2(ii), it works only if $\ell_0 \cap \ell_1 \notin R'Q$.

{\bf 9. b)} Adding $R'T$:
P2(i) is solved. As $\ell_0 \cap \ell_1 \notin R'T$, P2'(ii) does not cause a problem.
If $\ell_0 \cap RQ \notin R'T$, we have to add it to $S$ to solve P2'(i). This solves P1 as well (\cnum). 
If $\ell_0 \cap RQ \in R'T$, then P2'(i) is solved. To solve P1, we have to add an arbitrary point $Z$ on $\ell_0$ (\cnum) or on $R'Q$ (\cnum) except the point $\ell_0 \cap R'Q$ (in both cases). Note that we may add the points $P\in\ell_0$ or $Q\in R'Q$ as well.

{\bf 9. c)} Adding $R'Q$: recall that $\ell_0 \cap \ell_1 \notin R'Q$. 
P1, P2' and P2(ii) are solved. To solve P2(i), we may add an arbitrary point $Z$ on $\ell_1$ (\cnum) or on $R'T$ (\cnum). Note that we may also add the point $R$ on $\ell_1$.

These are the all possibilities to get a resolving set of size $4q-4$ assuming $|\Pe_S| \leq |\El_S|$. There are four constructions with $|\Pe_S|>|\El_S|$, the duals of (C1), (C2), (C7) and (C8).

%%%%%%%%%%%%%%%%%%%%%%%%%%%%%%%%%%%%%%%%%%%%%%%%%%%%%%%
\section{Semi-resolving sets in $\PG(2,q)$} \label{SectSemires}

We recall the definition of a semi-resolving set for a projective plane. By duality, it is enough to discuss the case when a point-set resolves the lines of the plane.

\begin{defi}\label{semidef0}
Let $\Pi=(\Pe,\El)$ be a projective plane. $S=\{P_1,\ldots,P_n\}\subset\Pe$ is a \emph{semi-resolving set} if the ordered distance list $(d(\ell,P_1),\ldots,d(\ell,P_n))$ is unique for every line $\ell\in\El$.
\end{defi}

As in case of resolving sets, we can give two quick rephrasals of the above definition.

\begin{prop}\label{semidef1}
$S\subset\Pe_S$ is a semi-resolving set of $\Pi$ if and only if the following hold:
\begin{enumerate}
 \item{there is at most one skew line to $S$;}
 \item{through every point of $S$ there is at most one tangent line to $S$.}
\end{enumerate}
\end{prop}
\begin{proof}
Straightforward.
\end{proof}

Recall that $\tau_2(\Pi)$ denotes the size of the smallest double blocking set in the plane $\Pi$. 

\begin{result}[\cite{BB}]\label{BB}
Let $q\geq 9$. Then $\tau_2(\PG(2,q))\geq2(q+\sqrt{q}+1)$, and equality holds if and only if $q$ is a square.
\end{result}

Let $\mu_S(\Pi)$ denote the size of the smallest semi-resolving set in $\Pi$. The first part of the next proposition was pointed out by Bailey \cite{Bailey}.

\begin{proposition}\label{semicon}
\mbox{}
\begin{itemize}
\item[(i)]{$\mu_S\leq \tau_2-1$.}
\item[(ii)]{If there is a double blocking set of size $\tau_2$ that is the union of two disjoint blocking sets, then $\mu_S\leq\tau_2-2$.}
\item[(iii)]{In particular, if $q$ is a square prime power, then $\mu_S(\PG(2,q))\leq 2q+2\sqrt{q}$.}
\end{itemize}
\end{proposition}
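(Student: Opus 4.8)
The plan is to exploit the observation that every double blocking set is automatically a semi-resolving set: by Proposition \ref{semidef1}, a double blocking set has no skew lines and no tangents at all, so both conditions hold vacuously and $\mu_S\leq\tau_2$. The three parts are then obtained by deleting one or two points from a suitable double blocking set and controlling, through Proposition \ref{semidef1}, the skew lines and tangents that this deletion creates.

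For (i) I would take a double blocking set $B$ with $|B|=\tau_2$ and set $B'=B\setminus\{P\}$ for an arbitrary $P\in B$. Since every line met $B$ in at least two points, each line still meets $B'$, so there is no skew line at all and condition (1) holds. The tangent lines to $B'$ are exactly the former $2$-secants of $B$ through $P$, so every tangent passes through $P\notin B'$; as any point $Q\in B'$ lies on a unique line through $P$, there is at most one tangent to $B'$ through $Q$, giving condition (2). Hence $B'$ is a semi-resolving set of size $\tau_2-1$.

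For (ii) I write the hypothesised set as $B=B_1\cup B_2$ with $B_1,B_2$ disjoint blocking sets and delete one point from each, $B'=B\setminus\{P_1,P_2\}$ with $P_i\in B_i$. The step that requires the most care is condition (2): a tangent line to $B'$ through a point $Q\in B_1\setminus\{P_1\}$ must avoid all of $B_2\setminus\{P_2\}$, and since $B_2$ is a blocking set this forces the line to pass through $P_2$, so it is the unique line $QP_2$; symmetrically, a tangent through $Q\in B_2\setminus\{P_2\}$ must be $QP_1$. Thus each point of $B'$ carries at most one tangent. For condition (1), a skew line to $B'$ would meet $B$ only in $\{P_1,P_2\}$, hence be tangent to $B_1$ at $P_1$ and to $B_2$ at $P_2$ simultaneously, forcing it to equal $P_1P_2$; so there is at most one skew line. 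Therefore $B'$ is a semi-resolving set of size $\tau_2-2$. I expect the forcing of the tangent through $P_2$ to be the main point to get right, since it is what prevents the two natural tangents $QP_1$ and $QP_2$ from both surviving.

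For (iii) I would instantiate the construction of (ii) with two disjoint Baer subplanes $B_1,B_2$, which exist because for a square prime power $q$ the point-set of $\PG(2,q)$ partitions into $q-\sqrt{q}+1$ mutually disjoint Baer subplanes. Each Baer subplane meets every line in $1$ or $\sqrt{q}+1$ points, hence is a blocking set of size $q+\sqrt{q}+1$, so their union is a double blocking set of size $2(q+\sqrt{q}+1)$ that splits into two disjoint blocking sets. Deleting one point from each as in (ii) yields a semi-resolving set of size $2q+2\sqrt{q}$, which is the claimed bound.
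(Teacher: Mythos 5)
Your proposal is correct and follows essentially the same route as the paper: delete one point (resp.\ one point from each of the two disjoint blocking sets) from a minimum double blocking set and verify the two conditions of Proposition \ref{semidef1}, then instantiate with two disjoint Baer subplanes for the square case. Your verification of (i) is in fact more detailed than the paper's one-line justification, and the key step you flag in (ii) --- that a tangent through $Q\in B_1\setminus\{P_1\}$ is forced through $P_2$ because $B_2$ blocks every line --- is exactly the paper's argument.
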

\begin{proof}
Let $B$ a double blocking set, and let $P\in B$. Then $B\setminus\{P\}$ is clearly a semi-resolving set \cite{Bailey} (without a skew line). This proves (i).\\
Let $B$ be a double blocking set of size $|B|=\tau_2$ that is the union of two disjoint blocking sets $B_1$ and $B_2$, and let $S=(B_1\cup B_2)\setminus\{P_1,P_2\}$ for some $P_1\in B_1$, $P_2\in B_2$. 
We check the requirements of Proposition \ref{semidef1}. Clearly, there can be at most one skew line, namely $P_1P_2$.
Take a point $Q$, say, from $B_1\setminus\{P_1\}$. As $B_2$ intersects every line through $Q$, the only possible tangent line to $S$ through $Q$ is $QP_2$. Thus (ii) is proven.
If $q$ is a square, it is well-known that one can find two disjoint Baer subplanes in $\PG(2,q)$, so (iii) follows from (ii).\\
\end{proof}

\begin{proposition}
\label{2q-1}
Let $S$ be a semi-resolving set of $\Pi_q$. Then $|S|\geq 2q-1$.
\end{proposition}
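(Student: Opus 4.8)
The plan is to bound $|S|$ from below by exploiting the two conditions of Proposition \ref{semidef1} through a double-counting argument on tangent and skew lines, much in the spirit of Proposition \ref{trivibound}. First I would let $b=|S|$ and set up the standard line-type counting. Condition (2) says the number of tangent lines is at most $b$ (at most one tangent through each point of $S$), and condition (1) says there is at most one skew line. Counting incidences between points of $S$ and the lines meeting $S$ in at least two points should yield a usable inequality.

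Concretely, I would double-count the pairs $\{(P,\ell) : P\in S,\ P\in[\ell],\ |[\ell]\cap S|\geq 2\}$. Every point of $S$ lies on $q+1$ lines; subtracting the at-most-$b$ tangents through points of $S$ (and noting skew lines carry no point of $S$), the left side is at least $b(q+1)-b = bq$ counted with multiplicity, while each $(\geq 2)$-secant contributes at least $2$ to the count. The number of such secants is $q^2+q+1$ minus the number of skew lines (at most $1$) minus the number of tangents (at most $b$). So I would expect to arrive at an inequality of the shape
\[
2\bigl(q^2+q+1-1-b\bigr)\leq bq+(\text{correction from the tangent count}).
\]
Rearranging should give $bq \gtrsim 2q^2$, i.e.\ $b\geq 2q - O(1)$, and a careful bookkeeping of the constants (using that each point on a tangent still lies on many secants) should sharpen this to $b\geq 2q-1$.

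The main obstacle will be squeezing the constant down to exactly $2q-1$ rather than something weaker like $2q-5$; the crude double count in Proposition \ref{trivibound} already loses several units, so the tangent and skew-line terms must be accounted for tightly. I would handle this by counting more carefully: instead of only using ``at most $b$ tangents,'' I would track, for each point $P\in S$, whether it actually carries a tangent, and use that a skew line forces an uncovered configuration. An alternative, cleaner route is to argue via Proposition \ref{semidef1} directly on the dual structure — the tangents through distinct points of $S$ must be distinct lines, so the $\leq b$ tangents together with the $\leq 1$ skew line leave at least $q^2+q-b$ secants, each absorbing at least two points of $S$ counted with multiplicity — and then optimize the resulting quadratic in $b$. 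I expect the final integer rounding (as in Proposition \ref{trivibound}, where $2q-6+8/q$ rounds up to $2q-5$) to be what delivers the exact bound $2q-1$.
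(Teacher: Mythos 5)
Your global double count is a genuinely different route from the paper's proof, and it does close — but only if the tangent term is handled in one specific way, and your sketch is ambiguous at exactly that point. The paper argues locally: assuming $|S|\leq 2q-2$, it finds a point $P\notin S$ on a tangent line $\ell$ that carries at most one further tangent and avoids the (at most one) skew line, and counting the points of $S$ on the $q+1$ lines through $P$ gives $|S|\geq 2(q-1)+1+1=2q$, a contradiction; a separate short paragraph disposes of the case where $S$ has no tangent at all. Your global count needs neither the choice of a special point nor the case split, which is a real simplification. Write $b=|S|$, let $t$ be the number of tangents and $s\leq 1$ the number of skew lines; by Proposition \ref{semidef1} each tangent meets $S$ in a point of $S$ carrying no other tangent, so $t\leq b$. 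The incidences lying on $(\geq 2)$-secants number exactly $b(q+1)-t$, and there are exactly $q^2+q+1-s-t$ such lines, whence
\[
2\left(q^2+q+1-s-t\right)\ \leq\ b(q+1)-t .
\]
The crucial step is to collect all occurrences of $t$ on one side \emph{before} estimating: this yields $2(q^2+q+1-s)\leq b(q+1)+t\leq b(q+2)$, hence $b\geq 2q(q+1)/(q+2)=2q-2+4/(q+2)$, and integrality gives $b\geq 2q-1$. If instead you substitute $t\leq b$ into the secant count and then separately bound the leftover tangent ``correction'' on the incidence side — which is what your displayed inequality $2(q^2+q+1-1-b)\leq bq+(\text{correction})$ suggests — you only reach $b\geq 2q(q+1)/(q+3)=2q-4+12/(q+3)$, i.e.\ roughly $b\geq 2q-3$, and no integer rounding recovers the lost units. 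So your approach is sound and arguably cleaner than the paper's, but the inequality $t\leq b$ must be applied exactly once, after the rearrangement; with that single precaution the ``careful bookkeeping'' you hoped for is not needed at all.
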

\begin{proof}
Suppose to the contrary that $|S|\leq 2q-2$. Take a line $\ell$ tangent to $S$. Count the other tangents of $S$ through the points of $[\ell]$. As there are at most $2q-2$ tangents to $S$, there are at least two points in $[\ell] \setminus S$ with at most one tangent through them (besides $\ell$). At least one of them, denote it by $P$, is not contained in the (possible) skew line. At least $q-1$ lines through $P$ are at least $2$-secants to $S$, one line is at least a $1$-secant and $\ell$ is a tangent, so $|S|\geq 2q$, contradiction.

If there is no tangent line to $S$, take a point $R$ outside $S$. There is at most one skew line to $S$ through $R$, the other $q$ lines through $R$ intersect $S$ in at least two points, so $|S|\geq 2q$, contradiction.
\end{proof}

From now on we work in $\PG(2,q)$, and $S$ denotes a semi-resolving set in $\PG(2,q)$ of size $|S|=2q+\beta$, $\beta\in\mathbb{Z}$, $\beta\geq -1$. Almost every line intersects $S$ in at least two points: at most $|S|+1$ lines can be exceptional (that is, a $(\leq1)$-secant). It would be natural to note how many exceptional lines are on a point $P$, yet we need a less straightforward number assigned to the points.

\begin{defi}
Let $S$ be a semi-resolving set. For a point $P$, let the \emph{$i$th index} of $P$, denoted by $\ind_i(P)$, be the number of $i$-secants to $S$ through $P$. Let the \emph{index} of $P$, denoted by $\ind(P)$, be $2\ind_0(P)+\ind_1(P)$. For the sake of simplicity, denote the index of the ideal point $(m)$ by $\ind(m)$ instead of $\ind((m))$.
\end{defi}

Note that if $P\notin S$, then $\ind_0(P)\leq 1$ (as there is at most one skew line through $P$); if $P\in S$, then $\ind(P)\leq1$ (as there are no skew lines and at most one tangent through $P$).

We will use the following algebraic result. For $r\in\mathbb{R}$, let $r^+:=\max\{0,r\}$.

\begin{result}[Sz\H{o}nyi-Weiner Lemma \cite{Sziklai, WSzT}]\label{SzW} 
Let $u,v\in\GF(q)[X,Y]$. Suppose that the term $X^{\deg(u)}$ has non-zero coefficient in $u(X,Y)$ (that is, the degree of $u$ remains unchanged after substituting an element into the variable $Y$). For $y\in\GF(q)$, let $k_y:=\deg\gcd\left(u(X,y),v(X,y)\right)$, where $\gcd$ denotes the greatest common divisor of the two polynomials in $\GF(q)[X]$. Then for any $y\in\GF(q)$,
\[\sum_{y'\in\GF(q)}{\left(k_{y'}-k_y\right)^+} \leq (\deg{u}-k_y)(\deg{v}-k_y).\]
\end{result}

\begin{proposition}\label{quadineq}
Let $P\in\Pe\setminus{S}$. Assume $\ind(P)\leq q-2$, and $\beta\leq 2q-4$. Let $t$ be the number of tangents to $S$ plus twice the number of skew lines to $S$. Then
\begin{eqnarray}
\label{ineqt}\ind(P)^2-(q-\beta)\ind(P)+t\geq0,
\end{eqnarray}
and
\begin{eqnarray}
\label{ineqgen}\ind(P)^2-(q-\beta)\ind(P)+2q+\beta\geq0.
\end{eqnarray}
\end{proposition}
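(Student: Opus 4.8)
The statement to prove is Proposition~\ref{quadineq}, relating the index of a point $P\notin S$ to the global tangent/skew count $t$ and to the size parameter $\beta$. The natural tool is the Sz\H{o}nyi--Weiner Lemma (Result~\ref{SzW}), which the paper explicitly signals it will use. My plan is to set up coordinates so that $P$ is an ideal point, encode the secant structure of $S$ through $P$ by a suitable bivariate polynomial, and read off the two indices $\ind_0(P)$ and $\ind_1(P)$ as degrees of greatest common divisors $k_y$. The inequality~\eqref{ineqt} should then emerge as a direct application of the Lemma, and~\eqref{ineqgen} should follow from~\eqref{ineqt} by bounding $t$ from above.

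**First step: coordinatize and build the polynomials.**
First I would place $P=(\infty)$ (or any ideal point), so that the lines through $P$ are the vertical lines $X=x$, and the other lines of $\PG(2,q)$ fall into the pencils indexed by their slopes. Writing the affine points of $S$ as pairs $(a_j,b_j)$, I would form the R\'edei-type polynomial
\[
F(X,Y)=\prod_{j}\bigl(X + a_j Y - b_j\bigr)\in\GF(q)[X,Y],
\]
whose factor for the point $(a_j,b_j)$ vanishes exactly when the line of slope $-Y$ through $(X,0)$ passes through $(a_j,b_j)$. The key bookkeeping fact is that for a fixed slope value $y$, the multiplicity with which $X=x_0$ divides $F(X,y)$ equals the number of points of $S$ on the line through $(x_0,0)$ with that slope; hence the $i$-secants of $S$ are captured by the multiplicities of linear factors of $F(X,y)$. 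To apply Result~\ref{SzW} I would take $u$ and $v$ to be $F$ together with a derivative (or a second, generic, specialization), chosen so that $k_y=\deg\gcd(u(X,y),v(X,y))$ counts, with appropriate weights, the tangents and skew lines in the direction $y$; the leading-coefficient hypothesis on $u$ is automatic because $F$ is monic in $X$.

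**Second step: identify $k_y$ with the index and invoke the Lemma.**
The crux is to arrange the specialization at the distinguished value $y$ corresponding to $P$ so that $\deg u - k_y$ becomes $\ind(P)=2\ind_0(P)+\ind_1(P)$, while $\sum_{y'}(k_{y'}-k_y)^+$ collects the tangent/skew contributions across all other directions, i.e.\ recovers $t$. Once this dictionary is fixed, the Sz\H{o}nyi--Weiner inequality
\[
\sum_{y'\in\GF(q)}\bigl(k_{y'}-k_y\bigr)^+\leq(\deg u-k_y)(\deg v-k_y)
\]
reads, after substitution, as a quadratic bound whose rearrangement is exactly $\ind(P)^2-(q-\beta)\ind(P)+t\geq 0$, using $|S|=2q+\beta$ and $\deg F=|S|$. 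The hypotheses $\ind(P)\le q-2$ and $\beta\le 2q-4$ are the conditions ensuring the relevant degrees stay in the range where the Lemma applies and the counting stays positive. For~\eqref{ineqgen}, I would simply observe that the number of exceptional lines is at most $|S|+1=2q+\beta+1$, so $t\le 2q+\beta$ (each skew contributing $2$ but there being at most one skew line), and substitute this upper bound for $t$ in~\eqref{ineqt}.

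**Main obstacle.**
The hard part will be the precise choice of the pair $(u,v)$ and the bookkeeping that makes $\deg u-k_y$ equal to $\ind(P)$ with the correct weights $2\ind_0+\ind_1$, rather than some other linear combination. Getting a tangent to count once and a skew line to count twice in both the local term ($\deg u-k_y$) and the global sum requires carefully matching the multiplicity structure of $F(X,y)$ to the definition of the index; an off-by-one or a misweighted skew line would break the clean quadratic form. I expect the rest --- the algebraic manipulation turning the Lemma's inequality into the stated quadratic, and the final substitution $t\le 2q+\beta$ --- to be routine.
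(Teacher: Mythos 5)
Your outline follows the same general route as the paper (R\'edei polynomial plus the Sz\H{o}nyi--Weiner Lemma), but the points you defer or gloss over are exactly where the proof lives, and one step is wrong. First, the pair $(u,v)$: you name the choice of $v$ as the ``main obstacle'' and suggest a derivative, which would not produce the weighting $2\ind_0+\ind_1$; the paper takes $v=(B^q-B)^2$, so that $k_m=\deg\gcd(R(m,B),(B^q-B)^2)$ counts simple roots once and multiple roots twice, whence $k_m=2q-\ind(m)$ for a non-vertical direction $(m)\notin S$. Since you explicitly leave this unresolved, the core identification is missing. Second, you skip the combinatorial preliminary in which the hypotheses are actually used: because $\ind(P)\le q-2$ and $\beta\le 2q-4$, some $(\ge 2)$-secant through $P$ meets $S$ in $s\le q-1$ points, and \emph{that} line is chosen as $\ell_\infty$. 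This makes $P$ an ideal point whose index is carried entirely by affine lines and guarantees $k_m=2q-\ind(m)$ for every $m$ in the set $D$ of non-vertical directions outside $S$ (no tangent or skew line through such a direction is hidden at infinity). Your opening move of placing $P=(\infty)$ is incompatible with the R\'edei setup you then write down, since the specializations $Y=y$ describe the pencils through the points $(y)$, not the vertical pencil.

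Third, and most concretely, your derivation of \eqref{ineqgen} from \eqref{ineqt} via ``$t\le 2q+\beta$'' does not hold: there may be up to $|S|$ tangents (one through each point of $S$) plus one skew line counted twice, so your argument only gives $t\le|S|+2=2q+\beta+2$, and the resulting inequality $\ind(P)^2-(q-\beta)\ind(P)+2q+\beta+2\ge0$ is too weak for the later application in Proposition \ref{indices}: substituting $\ind(P)=3$ there would yield only $\beta\ge(q-11)/4$, which does not contradict the hypothesis $\beta\le q/4-5/2$. The paper avoids this by bounding not $t$ but $\delta=\sum_{m\in D}\ind(m)$, the tangent/skew count restricted to directions in $D$: the tangents through the $s\ge2$ points of $S$ on $\ell_\infty$ have their ideal point in $S$, hence are not counted, giving $\delta\le|S|-s+2\le|S|=2q+\beta$. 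Both \eqref{ineqt} and \eqref{ineqgen} are then read off from the single intermediate inequality $\ind(P)^2-(q-\beta)\ind(P)+\delta\ge0$, using $\delta\le t$ and $\delta\le 2q+\beta$ respectively.
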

\begin{proof}
As $\ind_0(P)+\ind_1(P)\leq\ind(P)\leq q-2$, there are at least three lines through $P$ intersecting $S$ in at least two points, and all other lines intersect $S$ in at least one point except possibly the unique skew line. Among these three ($\geq2$)-secants, there must be one intersecting $S$ in $s\leq q-1$ points, otherwise $|S|\geq 3q + q-3=4q-3$ would hold, contradicting $\beta\leq 2q-4$.

Choose a coordinate system such that this $s$-secant line is the line at infinity $\ell_\infty$, $(\infty)\notin S$ and $P\neq(\infty)$. This can be done as $s\leq q-1$.
Let the set of the $|S|-s$ affine points of $S$ be $\{(x_i,y_i)\}_{i=1}^{|S|-s}$. Denote by $D$ the set of non-vertical directions that are outside $S$, $D\subset\GF(q)$. As $(\infty)\notin S$, $|D|=q-s$. 

Let \[R(B,M)=\prod_{i=1}^{|S|-s}(Mx_i+B-y_i)\in\GF(q)[B,M]\]
be the R\'edei polynomial of $S\cap\AG(2,q)$. If we substitute $M=m$ ($m\in\GF(q)$), then the multiplicity of the root $b$ of the one-variable polynomial $R(m,B)$ is the number of affine points of $S$ on the line $Y=mX+b$.
Fix $m\in\GF(q)$, and recall that $\ell_\infty$ is a $(\geq2)$-secant. Define $k_m$ as $k_m=\deg\gcd(R(m,B),(B^q-B)^2)$. Thus $k_m$ equals the number of single roots plus twice the number of roots of multiplicity at least two.

If $m\in D$, then the number of lines with slope $(m)$ that intersect $S\cap\AG(2,q)$ in at least one point or in at least two points is $q-\ind_0(m)$ and $q-(\ind_0(m)+\ind_1(m))$, respectively, thus $k_m=q-\ind_0(m)+(q-\ind_0(m)-\ind_1(m))=2q-\ind(m)$.

We use the Sz\H{o}nyi-Weiner Lemma with $u(B,M)=R(B,M)$ and $v(B,M)=(B^q-B)^2$. Note that the leading coefficient of both polynomials in $B$ is one, so Result \ref{SzW} applies. Let $P=(p)$ be our point on $\ell_\infty$ whose index shall be estimated. By the Lemma,

\[\sum_{m\in D}(k_{m}-k_p)\leq \sum_{m\in\GF(q)}(k_m-k_p)^+\leq (|S|-s-k_p)(2q-k_p)=(\ind(P)+\beta-s)\ind(P).\]

On the other hand, let $\delta=\sum_{m\in D}\ind(m)$; that is, we count the tangents and the skew line intersecting $\ell_\infty$ in $D$ with multiplicity one and two, respectively. Then $\sum_{m\in D}(k_m-k_p) = \sum_{m\in D}(\ind(P)-\ind(m)) = (q-s)\ind(P)-\delta$. Combined with the previous inequality we get
\begin{eqnarray}
\ind(P)^2-(q-\beta)\ind(P)+\delta\geq0.
\end{eqnarray}
As $\delta\leq t$, we obtain inequality \eqref{ineqt}.
Furthermore, as the (possibly not existing) skew line (counted with multiplicity two) may have a slope in $D$, and the (possibly not existing) tangents through the $s$ ($s\geq2$) points in $[\ell_\infty]\cap S$ are not counted in $\delta$, then $\delta\leq |S|-s+2\leq |S|=2q+\beta$. This gives inequality \eqref{ineqgen}.
\end{proof}

\begin{proposition}\label{indices}
Suppose $\beta\leq q/4-5/2$. Let $P\notin S$. Then $\ind(P)\leq 2$ or $\ind(P)\geq q-\beta-2$.
\end{proposition}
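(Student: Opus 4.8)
The plan is to feed the quadratic inequality \eqref{ineqgen} of Proposition \ref{quadineq} into a one-variable analysis, viewing $x=\ind(P)$ as the unknown. Set $f(x)=x^2-(q-\beta)x+2q+\beta$; then \eqref{ineqgen} asserts $f(\ind(P))\geq0$ whenever the hypotheses of Proposition \ref{quadineq} are in force. Since $f$ is an upward-opening parabola, the condition $f\geq0$ forces $\ind(P)$ to lie outside the open interval bounded by the two roots of $f$, and the whole point is to show that, under $\beta\leq q/4-5/2$, this root interval swallows every integer strictly between $2$ and $q-\beta-2$.

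I would argue by contradiction: suppose $3\leq\ind(P)\leq q-\beta-3$. First I check that Proposition \ref{quadineq} applies, i.e.\ that $\ind(P)\leq q-2$ and $\beta\leq 2q-4$. The former holds because $\ind(P)\leq q-\beta-3\leq q-2$, using $\beta\geq-1$, and the latter is immediate from $\beta\leq q/4-5/2$. Hence \eqref{ineqgen} yields $f(\ind(P))\geq0$. Next I evaluate $f$ at the two endpoints $x=3$ and $x=q-\beta-3$. These abscissae are symmetric about the vertex $x=(q-\beta)/2$ of the parabola, since $3+(q-\beta-3)=q-\beta$, so they produce the same value; a direct computation gives
\[
f(3)=f(q-\beta-3)=-q+4\beta+9.
\]
Under the hypothesis $\beta\leq q/4-5/2$ we have $4\beta\leq q-10$, so $f(3)=f(q-\beta-3)\leq-1<0$.

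Because $f$ is convex, its maximum over the interval $[3,q-\beta-3]$ is attained at an endpoint, whence $f(\ind(P))\leq-1<0$, contradicting $f(\ind(P))\geq0$. Therefore no integer value of $\ind(P)$ lies in $[3,q-\beta-3]$, which is precisely the assertion that $\ind(P)\leq2$ or $\ind(P)\geq q-\beta-2$.

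The computation itself is short, so there is no serious obstacle beyond bookkeeping; the two points that demand a little care are confirming that the endpoint evaluations coincide (the symmetry remark reduces this to a single computation rather than two), and verifying that the applicability condition $\ind(P)\leq q-2$ of Proposition \ref{quadineq} survives for all $\beta\geq-1$. One should also observe that the interval $[3,q-\beta-3]$ is nonempty in the relevant range, since $q-\beta\geq 3q/4+5/2\geq6$, so the convexity argument is not vacuous; were the interval empty, the conclusion would hold trivially.
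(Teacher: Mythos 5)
Your proof is correct and follows essentially the same route as the paper's: both substitute $\ind(P)=3$ and $\ind(P)=q-\beta-3$ into inequality \eqref{ineqgen}, derive $\beta\geq(q-9)/4$ in contradiction with $\beta\leq q/4-5/2$, and conclude by convexity of the quadratic. Your write-up is merely more explicit about the applicability condition $\ind(P)\leq q-2$ and the endpoint symmetry, which the paper leaves implicit.
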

\begin{proof}
Suppose that $P\notin S$ and $\ind(P)\leq q-2$ (in order to use the inequality \eqref{ineqgen} in Proposition \ref{quadineq}). Substituting $\ind(P)=3$ or $\ind(P)=q-\beta-3$ into \eqref{ineqgen}, we get
$\beta \geq (q-9)/4$, a contradiction. Thus either $\ind(P)\leq 2$, or $\ind(P)\geq q-\beta-2$.
\end{proof}

Hence, if $\beta\leq q/4-5/2$, we may call the index of a point large or small, according to the two possibilities above.

\begin{prop}\label{lindbl}
Assume $\beta\leq q/4-5/2$ and $q\geq 4$. Then on every tangent to $S$ there is at least one point with large index, and on the (possibly not existing) skew line there are at least two points with large index.
\end{prop}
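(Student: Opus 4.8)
The plan is to argue by contradiction using the index machinery built up in Proposition \ref{quadineq} and the dichotomy of Proposition \ref{indices}. The key idea is a counting argument along a tangent (or the skew line): every line of the plane meets $S$ in at least two points with only few exceptions, and these exceptions are exactly the tangents and the skew line. A point of small index ($\ind(P)\le 2$) is incident with very few exceptional lines, so if \emph{all} points of a tangent had small index, the tangents and skew line through those points could not account for the forced structure. More precisely, I would fix a tangent $\ell$ to $S$ and suppose, for contradiction, that every point of $\ell$ has small index.

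First I would set up the count on a tangent line $\ell$. Write $[\ell]=\{P_0,P_1,\dots,P_q\}$, where $P_0=[\ell]\cap S$ is the unique point of $S$ on $\ell$. For the $q$ points $P_1,\dots,P_q$ not on $S$, the assumption $\ind(P_i)\le 2$ together with $\ind(P_i)=2\ind_0(P_i)+\ind_1(P_i)$ tells us each such point lies on at most two exceptional lines (counting a skew line twice). Since $\ell$ itself is a tangent passing through every $P_i$, I would carefully extract from the index bound how many \emph{other} tangents or skew lines can pass through each $P_i$. The natural move is then to double-count incidences between the points of $\ell\setminus S$ and the exceptional lines other than $\ell$, and compare against the total number of such exceptional lines, which is at most $|S|+1=2q+\beta+1$ but, more usefully, is controlled by the index sum. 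The aim is to show the small-index assumption forces too few points of $S$ on the lines through the $P_i$, contradicting $|S|=2q+\beta$ with $\beta$ small.

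The cleaner route, which I expect to work and to be the one intended, is to apply Proposition \ref{quadineq} directly to a small-index point. If $P\notin S$ has $\ind(P)\le 2\le q-2$, then inequality \eqref{ineqt} reads $\ind(P)^2-(q-\beta)\ind(P)+t\ge 0$, but the interesting direction is the contrapositive: a point all of whose index is accounted for by the single tangent $\ell$ and at most a skew line cannot simultaneously sit on a tangent and have small index unless $t$ is forced to be large. I would instead exploit that a tangent $\ell$ has $P_0\in S$, through which there is \emph{no} skew line and at most this one tangent, and then argue that if every $P_i$ ($i\ge 1$) had small index, summing the indices along $\ell$ would bound the total number of exceptional lines crossing $\ell$ too tightly; meanwhile $\ell$ being tangent means $S$ misses $q$ of its points, so a lower bound on $|S|$ via the $(\ge 2)$-secants through the $P_i$ collides with $|S|=2q+\beta$ when $\beta\le q/4-5/2$. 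For the skew line the same count runs with $P_0$ replaced by two points, which is why two large-index points are forced rather than one.

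The main obstacle will be bookkeeping the skew line correctly, since it is counted with multiplicity two in the definition of $\ind$ and in $t$, and it may or may not exist. I would handle the tangent and the skew line as two parallel counts, being careful that on the skew line \emph{no} point of $S$ lies, so the baseline count $q+1$ points all lie outside $S$, whereas on a tangent exactly one point is in $S$; this asymmetry is precisely what upgrades the conclusion from one large-index point to two. The delicate inequality to verify is that the contribution of the small-index points is strictly too small, which should reduce to a linear inequality in $q$ and $\beta$ that holds under $\beta\le q/4-5/2$ and $q\ge 4$; I would check the boundary cases $\ind(P)\in\{0,1,2\}$ explicitly, since the bound $\ind(P)\le 2$ permits $\ind_0(P)=1$ (a genuine skew line through $P$) only when $P$ lies on the skew line, and this case-split is where an error is easiest to make.
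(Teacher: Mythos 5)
Your second, ``cleaner'' route is the paper's argument in outline: assume every point of the line in question has small index, use that assumption to bound the parameter $t$ of Proposition \ref{quadineq}, and then feed a small-index point into inequality \eqref{ineqt} to contradict $\beta\le q/4-5/2$. But two of the steps you leave open are exactly where the proof lives, and one of your fallback ideas does not work. First, the ``lower bound on $|S|$ via the $(\ge2)$-secants through the $P_i$'' can never close the argument: a point of index at most $2$ lies on at least $q-1$ lines meeting $S$ twice, which only yields $|S|\ge 2q-1$, i.e.\ $\beta\ge-1$ --- no contradiction. The contradiction must come from \eqref{ineqt}, and only after $t$ has been pinned down to roughly $q$: with the generic bound $t\le|S|+2$ (equivalently, using \eqref{ineqgen}), substituting $\ind(P)=2$ gives $6+3\beta\ge0$, which is always true. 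For a tangent $\ell$ the refined bound is $t\le q+1$, obtained by noting that all indices $\le2$ on $\ell$ forces the skew line not to exist (its intersection with $\ell$ would have index $\ge3$), that the point $\ell\cap S$ carries no further tangent, and that each of the $q$ outer points of $\ell$ carries at most one tangent besides $\ell$; then \eqref{ineqt} at $\ind(P)=2$ gives $2\beta\ge q-5$, a contradiction, and the degenerate subcase where all indices equal $1$ gives $t=1$ and $\beta\ge q-2$.

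The genuine gap is in the skew-line half. ``The same count runs with $P_0$ replaced by two points'' is not the mechanism, and the asymmetry you invoke ($q+1$ versus $q$ points outside $S$) does not by itself upgrade the conclusion to two large-index points. The missing observation is that \emph{every} tangent to $S$ meets the skew line $\ell$ in a point of index at least $3$ (two from $\ell$, one from the tangent), hence, by Proposition \ref{indices}, in a point of large index. Consequently, if $\ell$ carried at most one large-index point, all tangents would concur there, so there would be at most $q$ tangents and $t\le q+2$; meanwhile every small-index point of $\ell$ has index exactly $2$ (the skew line itself contributes $2$), and \eqref{ineqt} then reads $4-2(q-\beta)+q+2\ge0$, i.e.\ $2\beta\ge q-6$, contradicting $\beta\le q/4-5/2$ for $q\ge4$. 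Without this concurrency step you have no usable bound on $t$ for the skew-line case, and the quadratic inequality is vacuous.
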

\begin{proof}
Let $\ell$ be a skew line. A tangent line intersects $\ell$ in a point with index at least three, hence in a point with large index. If there were at most one point with large index on $\ell$, then there would be at most $q$ tangents to $S$, whence the parameter $t$ in Proposition \ref{quadineq} would be at most $q+2$. A point $P$ on $\ell$ with small index has index two, while by inequality \eqref{ineqt} we have $\ind(P)^2-(q-\beta)\ind(P)+q+2=2\beta+6-q\geq0$, in contradiction with $\beta\leq q/4-5/2$ under $q\geq 4$.

Suppose that $\ell$ is tangent to $S$. Suppose that all indices on $\ell$ are at most two. Then there is no skew line to $S$ as the intersection point would have index at least three. Then we have $t\leq 1+q$. If there is a point $P\in [\ell]\setminus{S}$ with index two, \eqref{ineqt} gives $4-2(q-\beta)+q+1=\beta+5-q\geq0$, a contradiction. If all points on $\ell$ have index one, then $t=1$, and \eqref{ineqt} yields $2-q+\beta\geq0$, again a contradiction.
\end{proof}

\begin{theorem}\label{2bl}
Let $S$ be a semi-resolving set in $\PG(2,q)$, $q\geq4$. If $|S|< 2q+q/4-3$, then one can add at most two points to $S$ to obtain a double blocking set.
\end{theorem}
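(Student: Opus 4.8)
The plan is to control the number and configuration of exceptional lines (tangents and the possible skew line) of $S$, using the index machinery developed in Propositions \ref{indices} and \ref{lindbl}, and to show that all these lines can be blocked a second time by adding at most two points. First I would observe that a point lies off $S$ with small index (index $\leq 2$) or with large index (index $\geq q-\beta-2$) by Proposition \ref{indices}, where $\beta = |S|-2q < q/4-3$. A point of large index meets $S$ in very few points on the lines through it; in fact, summing secant lengths through a large-index point $P$ gives $|S| \geq 2(q+1-\ind_0(P)-\ind_1(P)) + (\ind_0(P)+\ind_1(P))$, and combined with $\ind(P) = 2\ind_0(P)+\ind_1(P)$ being large, this forces $|S|$ to be close to $2q$ and pins down the number of large-index points sharply.

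The key structural step is to count large-index points. By Proposition \ref{lindbl}, every tangent carries at least one large-index point and the skew line (if present) carries at least two. I would set up a double count of incidences between tangents (plus the doubled skew line) and large-index points. Since two distinct large-index points determine a line that meets $S$ in few points, and the total size budget $|S| = 2q+\beta$ is tight, the number $L$ of large-index points must be very small — I expect $L \leq 2$, or at most $L \leq 3$ with a degenerate configuration. Concretely, if there were three large-index points in general position, the three connecting lines would each be short secants, and counting points of $S$ on the lines through these three points would contradict $|S| < 2q+q/4-3$. So the large-index points are few and essentially collinear or coincident in their covering role.

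Once $L$ is bounded by two, I would argue that adding those (at most two) large-index points to $S$ produces a double blocking set. The point is that a large-index point $P$, once adjoined, supplies a second intersection point on every exceptional line through it: by Proposition \ref{lindbl} every tangent and the skew line passes through a large-index point, so if the at most two large-index points together lie on every exceptional line, adjoining them blocks each previously-tangent line a second time and blocks the skew line the required two times. Thus I must verify that every tangent and the skew line actually passes through one of these $\leq 2$ large-index points — this follows from Proposition \ref{lindbl} together with the bound $L\leq 2$, since each exceptional line needs a large-index point and there are only two available, forcing them all to pass through this small set.

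The main obstacle I anticipate is the combinatorial bookkeeping that forces $L \leq 2$: I need to rule out three or more large-index points while respecting the exact inequality $|S| < 2q+q/4-3$ and handling the boundary behavior of the index dichotomy near $\ind(P)=q-2$ (where Proposition \ref{indices} was derived). The degenerate sub-case where large-index points are collinear, or where the skew line coincides with the line through two large-index points, will require separate verification using inequality \eqref{ineqt} with the sharpened estimate on $t$. Establishing cleanly that the at most two added points cover \emph{all} tangents and the skew line — rather than merely most of them — is where the argument is most delicate, and I would lean on Proposition \ref{lindbl} applied to each exceptional line individually to close this gap.
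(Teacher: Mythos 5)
Your overall strategy coincides with the paper's: use the index dichotomy of Proposition \ref{indices}, show that there are at most two points of large index, adjoin all of them, and invoke Proposition \ref{lindbl} to conclude. The final step is indeed as easy as you hope and not delicate at all: since you add \emph{every} large-index point and Proposition \ref{lindbl} places at least one of them on each tangent and at least two on the skew line, each exceptional line automatically acquires the required second (or second and first) point.

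The genuine problem is the concrete mechanism you propose for the crucial bound on the number of large-index points. ``Counting points of $S$ on the lines through these three points'' cannot produce a contradiction: the pencil through a single point already accounts for all of $S$, and your one-point version of this count only gives $|S|\geq 2q+2-\ind(P)$, which is vacuous (not restrictive) when $\ind(P)$ is large. The count that works goes the other way and counts \emph{tangent lines}: a large-index point $P\notin S$ carries $\ind_1(P)\geq\ind(P)-2\ind_0(P)\geq q-\beta-4$ tangents, while the total number of tangents to $S$ is at most $|S|$, because each tangent meets $S$ in one point and each point of $S$ lies on at most one tangent by Proposition \ref{semidef1}. Even then, three \emph{arbitrary} large-index points yield only $3(q-\beta-4)-3$ distinct tangents after discarding the three connecting lines, and $3(q-\beta-4)-3>2q+\beta$ is equivalent to $\beta<q/4-15/4$, which is strictly weaker than the hypothesis $\beta<q/4-3$; so the naive triple count fails for some admissible $\beta$. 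The paper therefore organizes the case analysis around the skew line rather than around ``general position'': if a skew line $\ell$ exists, three large-index points on $\ell$ have pairwise disjoint tangent pencils (no subtraction needed), giving $3(q-\beta-4)>|S|$; a large-index point off $\ell$ is impossible because its $q-\beta-2>3$ tangents would meet $\ell$ in that many points of index at least $3$, hence of large index; and if no skew line exists, $\ind_0\equiv 0$ improves the per-point count to $q-\beta-2$, so that $3(q-\beta-2)-3>9q/4>|S|$. You would need to replace your counting step by something of this kind to close the argument.
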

\begin{proof}
Proposition \ref{indices} works, and by Proposition \ref{lindbl} we see that if there is no point with large index, then $S$ is a double blocking set, hence $|S|\geq\tau_2$. Hence we may assume that there are points with large index.

Suppose that there exists a line $\ell$ skew to $S$. Assume that there exist three points with large index on $\ell$. Then the number of tangent lines to $S$ through these points is at least $3(q-\beta-4)>3(3q/4-1)=9q/4-3$. On the other hand, there are at most $|S|<2q+q/4-3$ of them, a contradiction. Thus there are at most two points with large index on $\ell$.

Suppose that there is a point $P$ with large index not on $\ell$. Then the intersection points of the $q-\beta-2>3q/4+1>3$ tangents through $P$ with $\ell$ would have index $\geq 3$, hence a large index, which is not possible. Thus every point with large index is on $\ell$, and hence there are at most two of them.

If there is no skew line to $S$, then the number of points with large index is also at most two, as on three points with large index we would see at least $3(q-\beta-2)-3>9q/4$ tangents to $S$, again a contradiction.

Recall that the points with large index are not in $S$. Add the points with large index to $S$. Then, by Proposition \ref{lindbl}, we obtain a double blocking set.
\end{proof}

\begin{cor}[Theorem \ref{semirestheo}]
Let $S$ be a semi-resolving set in $\PG(2,q)$, $q\geq4$. Then $|S|\geq\min\{2q+q/4-3,\tau_2-2\}$. 
\end{cor}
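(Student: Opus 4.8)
The plan is to obtain this statement as an immediate corollary of Theorem \ref{2bl}, so essentially all of the work has already been done and what remains is a short case analysis on the threshold $2q+q/4-3$. Since the assertion concerns a minimum of two quantities, I would first note that it suffices to prove the dichotomy: either $|S|$ already meets the first bound $2q+q/4-3$, or else it meets the second bound $\tau_2-2$. In either case $|S|$ is at least one of the two competing quantities, hence at least their minimum, which is exactly what we want.

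The substantive step is the second alternative. Assume $|S|<2q+q/4-3$. Then the hypotheses of Theorem \ref{2bl} are met (note $q\geq4$ is carried over from the corollary), so I may invoke it to produce a double blocking set $B$ obtained by adjoining at most two points to $S$; in particular $S\subseteq B$ and $|B|\leq|S|+2$. Because $\tau_2$ is by definition the size of a smallest double blocking set in $\PG(2,q)$, we have $\tau_2\leq|B|\leq|S|+2$, and rearranging yields $|S|\geq\tau_2-2$. This is precisely the second term of the minimum.

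Combining the two alternatives finishes the argument: if $|S|\geq2q+q/4-3$ then trivially $|S|\geq\min\{2q+q/4-3,\tau_2-2\}$, and otherwise the paragraph above gives $|S|\geq\tau_2-2\geq\min\{2q+q/4-3,\tau_2-2\}$. In both cases the claimed inequality holds.

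As for the main obstacle: there is essentially none at this stage. All the difficulty lives in the earlier propositions — the Sz\H{o}nyi--Weiner estimate of Proposition \ref{quadineq}, the index dichotomy of Proposition \ref{indices}, the covering statement of Proposition \ref{lindbl}, and their synthesis into Theorem \ref{2bl}. Once Theorem \ref{2bl} is granted, the only thing requiring genuine (if minimal) care is matching hypotheses, namely verifying that the regime $|S|<2q+q/4-3$ with $q\geq4$ is exactly the one in which the theorem guarantees an extension to a double blocking set by at most two points; the deduction of the corollary is then purely formal.
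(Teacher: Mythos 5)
Your proposal is correct and coincides with the paper's (essentially implicit) argument: the dichotomy on whether $|S|<2q+q/4-3$, followed in the nontrivial case by invoking Theorem \ref{2bl} to extend $S$ by at most two points to a double blocking set and concluding $\tau_2\leq|S|+2$, is exactly how the corollary is derived. Nothing is missing.
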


Recall that $\tau_2(\PG(2,q))=2q+2\sqrt{q}+2$ if $q\geq 9$ is a square (Result \ref{BB}). Thus if $q\geq 121$ is a square, then $\tau_2-2<2q+q/4-3$, hence $\mu_S(\PG(2,q))\geq 2q+2\sqrt{q}$. Proposition \ref{semicon} (ii) shows that equality holds. As $\PG(2,q)$ is isomorphic to its dual, Theorem \ref{semirestheo} holds for lines as well, that is, if we want to resolve the point-set of $\PG(2,q)$ by a set of lines, the same bounds apply. Hence the double of the respective bounds are valid for split resolving sets, so Corollary \ref{semirescor} follows. From the results of \cite{BSS} it follows that in $\PG(2,q)$, $q>256$, any double blocking set of size $2(q+\sqrt{q}+1)$ is the union of two disjoint Baer subplanes. It can be shown that, for $q>9$, if we consider the union $S$ of two disjoint Baer subplanes and take out two points from one of them, then we find a point in the other one on which there are two tangents to $S$. Consequently, assuming the respective bounds on $q$, all semi-resolving sets for $\PG(2,q)$ of size $2q+2\sqrt{q}$ are obtained in Proposition \ref{semicon} (ii).

We remark that for small values of $q$, there are semi-resolving sets smaller than $\tau_2-2$. Three points in general position show $\tau_2(\PG(2,q))=3$. A vertexless triangle (the union of the point-set of three lines in general position without their three intersection points) is easily seen to be a semi-resolving set of size $3q-3$ for $q\geq 3$. If $q\geq4$, we may take out one more (arbitrary) point to obtain a semi-resolving set of size $3q-4$. (In fact, there are no smaller semi-resolving sets than the previous ones for $q=2,3,4$.) On the other hand, $\tau_2(\PG(2,q))=3q$ for $q=2,3,4,5,7,8$ (mentioned in \cite{BB}; this result is due to various authors).

Finally, let us mention an immediate consequence of Theorem \ref{2bl} on the size of a blocking semioval. For more information on semiovals, we refer to \cite{Kiss}.

\begin{defi}
A point-set $S$ in a finite projective plane is a \emph{semioval}, if for all $P\in S$, there is exactly one tangent to $S$ through $P$. A semioval $S$ is a \emph{blocking semioval}, if there are no skew lines to $S$.
\end{defi}

Lower bounds on the size of blocking semiovals are of interest. Up to our knowledge, the following is the best bound known.

\begin{result}[Dover \cite{Dover}]
Let $S$ be a blocking semioval in an arbitrary projective plane of order $q$. If $q\geq7$, then $|S|\geq 2q+2$. If $q\geq3$ and there is a line intersecting $S$ in $q-k$ points, $1\leq k\leq q-1$, then $|S|\geq 3q-2q/(k+2)-k$. 
\end{result}

\begin{cor}\label{blso}
Let $S$ be a blocking semioval in $\PG(2,q)$, $q\geq4$. Then $|S|\geq 9q/4-3$.
\end{cor}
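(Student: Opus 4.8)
The plan is to combine Theorem \ref{2bl} with the rigid tangent structure of a blocking semioval. First I would record that a blocking semioval $S$ is in particular a semi-resolving set: it has no skew line and exactly one tangent through each of its points, so conditions (1)--(2) of Proposition \ref{semidef1} hold. Moreover, since every tangent meets $S$ in a single point and each point of $S$ carries exactly one tangent, the tangents are in bijection with the points of $S$; in particular $S$ has exactly $|S|$ tangent lines and no skew lines. For $q\leq 8$ one checks $2q-1\geq 9q/4-3$ (equivalently $q\leq 8$), so the claim is immediate from Proposition \ref{2q-1}. Hence from now on I assume $q\geq 9$ and, for contradiction, $|S|<9q/4-3=2q+q/4-3$.

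Under this assumption Theorem \ref{2bl} applies and yields a set $X$ of at most two points, disjoint from $S$, such that $B=S\cup X$ is a double blocking set. The key observation is that $X$ must capture every tangent: if $t$ is a tangent to $S$, then $|t\cap S|=1$, while $|t\cap B|\geq 2$ because $B$ is a double blocking set, so $t$ passes through a point of $X$. Thus each of the $|S|$ (distinct) tangents of $S$ contains a point of $X$.

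I would finish by a short case analysis on $|X|$. Since a semioval has a tangent through each of its points, $S$ is not a double blocking set, so $|X|\geq 1$. If $X=\{X_1\}$ is a single point, then all $|S|$ tangents pass through $X_1$, whence $|S|\leq q+1$; but $|S|\geq 2q-1$ by Proposition \ref{2q-1}, forcing $q\leq 2$, a contradiction. If $X=\{X_1,X_2\}$, then every tangent lies among the lines through $X_1$ or through $X_2$, and there are exactly $(q+1)+(q+1)-1=2q+1$ such lines, since the two pencils share only the line $X_1X_2$; hence $|S|\leq 2q+1$. On the other hand, the result of Dover \cite{Dover} gives $|S|\geq 2q+2$ for $q\geq 7$, a contradiction. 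Therefore $|S|\geq 9q/4-3$, as claimed.

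I expect the only genuinely delicate point to be the tangent-capturing observation together with the elementary bookkeeping that two points lie on only $2q+1$ common lines; everything else is a direct appeal to Theorem \ref{2bl}, Proposition \ref{2q-1}, and the cited bound of Dover. The main conceptual obstacle is recognizing that the rigidity of a blocking semioval (exactly $|S|$ tangents, no skew line) forces the at-most-two extension points of Theorem \ref{2bl} to lie on \emph{all} tangents, which caps $|S|$ at $2q+1$ and collides with the classical blocking-semioval lower bound.
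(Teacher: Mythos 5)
Your proposal is correct and follows essentially the same route as the paper: apply Theorem \ref{2bl} to obtain at most two points that must block all $|S|$ tangents of the blocking semioval, then contradict this with the fact that two points lie on only $2q+1$ lines. The sole difference is cosmetic: where you obtain $|S|>2q+1$ from Dover's bound (for $q\geq 9$) together with Proposition \ref{2q-1} (for $q\leq 8$), the paper instead uses $|S|\geq\tau_2-2>2q+1$ via the known values of $\tau_2$; both inputs are legitimate and the core argument is identical.
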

\begin{proof}
By Proposition \ref{semidef1}, $S$ is clearly a semi-resolving set. Suppose to the contrary that $|S|<9q/4-3$. Then by Theorem \ref{2bl}, we find two points, $P$ and $Q$, such that $S\cup\{P,Q\}$ is a double blocking set, that is, $P$ and $Q$ block all the $|S|$ tangents to $S$.
On the other hand, $|S|\geq\tau_2-2>2q+1$ (here we use $q\geq4$ and $\tau_2=3q$ for $q\leq 8$, $\tau_2\geq2q+2\sqrt{q}+2$ for $q\geq9$.) Hence $S$ has more than $2q+1$ tangents. However, $P$ and $Q$ can block at most $2q+1$ of them, a contradiction.
\end{proof}

Note that Dover's result is better than Corollary \ref{blso} if there is a line intersecting the blocking semioval in more than $q/4$ points (roughly).

%%%%%%%%%%%%%%%%%%%%%%%%%%%%%%%%%%%%%%%%%%%%%%%%%%%%%%%

\subsection*{Acknowledgements}
We are grateful for the advices and support of P\'eter Sziklai and Tam\'as Sz\H{o}nyi. We are also thankful to Bence Csajb\'ok for pointing out the connection of semi-resolving sets and blocking semiovals.

\newpage

 \textheight 25 cm
 \topmargin -1cm

%\vspace{1mm}
%\thispagestyle{empty} 

\begin{figure}[!h]
\hspace{-20mm}\vspace{5mm}
\scalebox{0.95}{
%rescons_0723
\begingroup\makeatletter\ifx\SetFigFontNFSS\undefined%
\gdef\SetFigFontNFSS#1#2#3#4#5{%
  \reset@font\fontsize{#1}{#2pt}%
  \fontfamily{#3}\fontseries{#4}\fontshape{#5}%
  \selectfont}%
\fi\endgroup%

\renewcommand{\mb}{\makebox(0,0)}
\renewcommand{\cd}{3}
\renewcommand{\cdd}{3.7}
\renewcommand{\dll}{1.75}
\renewcommand{\fontdef}{\SetFigFontNFSS{10}{10}{\rmdefault}{\mddefault}{\updefault}}
\renewcommand{\fontsmall}{\SetFigFontNFSS{8}{8}{\rmdefault}{\mddefault}{\updefault}}

\setlength{\unitlength}{0.63mm}
\renewcommand{\dashlinestretch}{40}

\fontdef
\thinlines

% [inline block 0: 1 envs, 53722 chars -> data_tex | \begin{picture}(250,350)(-10,-340) ...]

}
\caption{\label{consfig} The 32 types of resolving sets of size $4q-4$ with $|\PS|\leq|\LS|$.}
\end{figure}
%\end{center}

\newpage
\centerline{\Large \textbf{Clarifications and minor corrections}}

%\centerline{\today}

%\newcommand{\corr}[2]{{\color{red}#1}}

\bigskip

In Section 3 (Constructions), the following additions have been made.

\begin{itemize}
\item{{\bf 4.} In this case, as $R'\in\Pe_S$, actually $U=R'$ may also occur. Thus
\begin{enumerate}
\item in (C10), we may add $R'Q$ as well to solve P2' (p12, l22: `or $R'Q$');
\item in (C12), we may add the line connecting $R'$ and $\ell_0 \cap RQ$ to solve P2' (p12, l23: `or $R'$').
\end{enumerate}}
\item{
\noindent{\bf 9. b)} To be coherent, we have noted that $Z=\ell_0\cap R'Q$ is forbidden in (C29) as well, and $Z=Q$ is possible in (C30) (p13, l-7: `(in both cases)'; p13, l-6: `or $Q\in R'Q$').
}
\end{itemize}

Parts (C10), (C12), (C15), (C29), (C30), (C31), (C32) and the legend of Figure 3 have been updated accordingly. These changes mostly clarify special cases; e.g., in (C29), $Z\in \ell_0$ may be $P$ or $\ell_0\cap R'T$ as well, and thus, not to suggest the contrary, these points are not depicted as outer points anymore but as possibly added points. So a possibly added point $U$ is in $\PS$ if and only if we choose $Z=U$. Another type of correction regards the added lines; e.g., in (C10), the added line may intersect $e$ in any point of $\PS$ including the added point $R'$; hence we have not depicted the intersection as a point of $S^*$. Furthermore, some subtle aesthetical changes have been made without mention.

\end{document}